\newcommand{\RN}[1]{%
	\textup{\uppercase\expandafter{\romannumeral#1}}%
}
\newtheorem{Theorem}{Theorem}[section]
\newtheorem{Proposition}{Proposition}[section]
\newtheorem{Lemma}{Lemma}[section]
\theoremstyle{definition}
\newtheorem{Definition}{Definition}[section]
\newtheorem{Remark}{Remark}
\newtheorem{Assumptions}{Hypothesis}[section]
\def\R{{\mathbb{R}}}
\def\cU{{\mathcal{U}}}
\def\t\cU{{\widetilde{{\mathcal{U}}}}}
\newcommand\norm[1]{\left\lVert#1\right\rVert}
\def\ds{\displaystyle}
\title{Degenerate fourth order parabolic equations with Neumann boundary conditions}
\author{{\sc Alessandro Camasta}\thanks{The author is a member of the  {\it Gruppo Nazionale per l'Analisi Ma\-te\-matica, la Probabilit\`a e le loro Applicazioni (GNAMPA)} of the Istituto Nazionale di Alta Matematica (INdAM), a member of {\it UMI ``Modellistica Socio-Epidemiologica (MSE)''} and he is partially supported by PRIN 2017-2019 {\it Qualitative and quantitative aspects of nonlinear PDEs.} He is also supported by the project {\it Mathematical models for interacting dynamics on networks (MAT-DYN-NET)
				CA18232.}}\\
Department of Mathematics\\ University of Bari Aldo Moro\\
Via
E. Orabona 4\\ 70125 Bari - Italy\\ e-mail: alessandro.camasta@uniba.it\\
{\sc Genni Fragnelli}\thanks{The author is a member of the  {\it Gruppo Nazionale per l'Analisi Ma\-te\-matica, la Probabilit\`a e le loro Applicazioni (GNAMPA)} of the Istituto Nazionale di Alta Matematica (INdAM), a member of {\it UMI ``Modellistica Socio-Epidemiologica (MSE)''} and is supported by FFABR {\it Fondo per il finanziamento delle attivit\`a base di ricerca} 2017, by  PRIN 2017-2019 {\it Qualitative and quantitative aspects of nonlinear PDEs} and by the
	HORIZON$_-$EU$_-$DM737 project 2022 {\it COntrollability of PDEs in the Applied Sciences (COPS)} at Tuscia University. She is also supported by the project {\it Mathematical models for interacting dynamics on networks (MAT-DYN-NET)
		CA18232.}}\\
Department of Ecology and Biology\\ Tuscia University\\
Largo dell'Universit\`a, 01100 Viterbo - Italy\\ e-mail: genni.fragnelli@unitus.it}
\date{}
\begin{document}

\maketitle

\abstract{We study the generation property for a fourth order operator in divergence or in non divergence form with suitable Neumann boundary conditions. As a consequence we obtain the well posedness for the parabolic equations governed by these operators. The novelty of this paper is that the operators depend on a function $a: [0,1] \rightarrow \R_+$ that degenerates somewhere in the interval.}
\vspace{0.3cm}

\noindent Keywords:  Degenerate operators in divergence form, degenerate operators in non divergence form, Neumann boundary conditions, interior and boundary degeneracy.
\vspace{0.03cm}

\noindent 2000AMS Subject Classification: Primary: 47D06, 35K65; Secondary: 47B25, 47N20

\section{Introduction}
In this paper we provide a full analysis of two  operators under certain Neumann boundary conditions. More precisely, we consider
\begin{equation}\label{operatori A_i}
	A_iu:=\begin{cases}
		\frac{\partial^2}{\partial x^2}\Bigl (a\frac{\partial^2u}{\partial x^2}\Bigr ) &\text{if }\,\, i=1,\\
		a\frac{\partial^4u}{\partial x^4} &\text{if }\,\, i=2,
	\end{cases}
\end{equation}
where the function $a:[0,1]\to\mathbb{R}_+$ degenerates at $x_0\in [0,1]$.

We are interested in studying this kind of degenerate operators, which represent the subject of numerous papers and books, together with suitable Neumann boundary conditions for two reasons. 

First of all, many problems coming from Physics, Biology and Economics are governed by degenerate operators
that often have to do with Neumann boundary conditions. 

Secondly, there are real problems connected to freely supported beams (with only fixed end points and without any other condition) and beams supported at only one end, that lead to natural boundary conditions. It is relevant to observe that end point conditions are modular, i.e., it is possible to use different end point conditions at each end of the beam. In particular, the main applications are characterized by the choice of different conditions:
\begin{itemize}
	\item clamped: specifies the position and the derivative;
	\item freely supported: specifies the position (in this case the natural boundary condition is that the second derivative is zero at the end point);
	\item no condition: neither position nor end point are specified (the natural boundary conditions fix the second and third derivatives at the end point to be zero).
\end{itemize}

Let us present some interesting contributions about the boundary conditions for problems associated to the beam analysis.

Starting from a static fourth order Euler-Bernoulli beam equation, we recall \cite{bo yang} where the authors study the fourth order ordinary differential equation
\begin{equation}\label{+}
	u''''(x)=g(x)f(u(x)),\quad\,\,\,\,\,\,\,\,\,\,\,0\le x\le 1,
\end{equation}
which is often referred to as the beam equation. In the formulation of the previous equation, the authors assume that $f:[0,+\infty)\to [0,+\infty)$ and $g:[0,1]\to [0,+\infty)$ are continuous functions and $g$ is such that $\int_0^1 g(t)dt>0$. From the physical point of view, \eqref{+} describes the deflection or deformation of an elastic beam under a certain force, together with the following  boundary conditions
\begin{equation}\label{bound cond}
	u(0)=u'(0)=u''(1)=u'''(1)=0,
\end{equation}
which arise from the study of elasticity.
 These conditions have definite physical meanings; in fact, they mean that the
beam is embedded at the end $x=0$ and free at the end $x=1$. 
Furthermore, the boundary conditions (\ref{bound cond}) represent a special case of the so called right focal boundary conditions, for which an extensive research has been done (we refer to  \cite{argai} for a systematic survey of this field; for some recent results on focal boundary value problems, see, e.g., \cite{argai regan} and the references therein).
Equation \eqref{+} is also analyzed in \cite{bo yang2}, where it is coupled with the boundary conditions
\begin{equation*}
	u(0)=u'(0)=u'(1)=u'''(1)=0,
\end{equation*}
which mean that the beam is embedded at $x=0$ and fastened with a sliding clamp at  $x=1$.
In \cite{GUPTA} a boundary value problem under the following boundary conditions is considered:
\begin{equation*}
	u'(0)=u'''(0)=u'''(\pi)=u'(\pi)=0.
\end{equation*}
It means that the beam is fastened with sliding clamps at both ends $x=0$ and $x=\pi$. Similar conditions are considered in \cite{yang zhang} in order to study the existence of non trivial solutions to the semilinear fourth order problem
\begin{equation*}
	\begin{cases}
		u''''(x)-2u''(x)+u(x)=f(x,u(x)),& 0<x<1,\\
		u'(0)=u'(1)=u'''(0)=u'''(1)=0,
	\end{cases}
\end{equation*}
being $f\in\mathcal{C}([0,1]\times\mathbb{R};\mathbb{R})$.
Imposing some ideal conditions, the deflection of a beam rigidly 
fastened at left and simply fastened at right led \cite{argai0} to a fourth order non linear differential equation together with two point boundary conditions.
Then \cite{elgindi} takes in consideration different boundary conditions, corresponding to various ways in which the ends of a beam may be supported, and examines a class of  fourth order non linear boundary value problems which govern the equilibrium states of a beam-column. In this case the source of the non linearity comes from a non linear lateral constraint (foundation) and the equilibrium equation is formulated as a fourth order non linear differential equation along with one of the following six sets of boundary conditions:
\begin{equation*}
	\begin{split}
		u(0)=u''(0)=u(1)=u''(1)=0,\\
		u(0)=u''(0)=u(1)=u'(1)=0,\\
		u(0)=u''(0)=u'(1)=u'''(1)=0,\\
		u(0)=u'(0)=u(1)=u'(1)=0,\\
		u(0)=u'(0)=u''(1)=u'''(1)=0,\\
		u(0)=u'(0)=u'(1)=u'''(1)=0.
	\end{split}
\end{equation*}
These conditions represent, respectively, the following situations: both ends are simply-supported; one end is simply-supported and the other is fixed; one end is simply-supported and the other is sliding clamped; both ends are fixed; one end is fixed and the other is free and one end is fixed and the other is sliding clamped. See also \cite{kosmatov}, in which the beam equation is studied together with the boundary conditions
\[
	u(0)=u'(0)=u'(1)=u(1)=0,
\]
which mean that the beam is embedded at both ends $x=0$ and $x=1$.
For other works connected to boundary value problems of the beam equation we refer, for example, to \cite{bai}, \cite{davis}, 
\cite{graef}, \cite{gupta},
\cite{liu}, 
\cite{ma2}, \cite{yang}, \cite{yao2} and the references therein.

However,  there are some equations of great interest due to their applications in many engineering fields, such as mechanical engineering and civil engineering, that are not written as a static beam equation, but as a partial differential equation governed by the operators introduced in \eqref{operatori A_i}. In this context, the coefficient $a(x)$ may be thought as the flexural rigidity (for $i=1$) or as the density of the beam (for $i=2$).
Indeed, the problem of the transversely vibrating beam, for which the most important factor is represented by the bending effect, is formulated in terms of the partial differential equation of motion, of the boundary and initial conditions, which give rise to an initial boundary value problem.
More precisely, if $u(t,x)$ denotes the transverse displacement at time $t$ and position $x$ from one
end of the beam taken as the origin, $a(x)$ the flexural rigidity,
and $m(x) > 0$ the lineal mass, the transverse motion of an unloaded thin beam is represented by the following Euler-Bernoulli beam equation
\[
m(x)u_{tt} +(a(x)u_{xx})_{xx} =0,
\]
where $t>0$ and $x \in (0,L)$.
On the other hand, if we consider an external forcing function $f$ and if $E$ is the elastic modulus (or Young's modulus), $I$ is the area of inertia and $\lambda$ is the mass per unit length, then 
 the problem of the deflection of the beam can be described by the following fourth order partial differential equation:
	\begin{equation}\label{01}
		\lambda \frac{\partial^2u}{\partial t^2}+\frac{\partial^2}{\partial x^2}\Biggl (EI\frac{\partial^2u}{\partial x^2}\Biggr )=f(u).
	\end{equation}
	The applied load function $f$, which is  sufficiently smooth, can depend on $u$, as in this case, or  on time and position.
If $E, I, \lambda$ are constant, after a rescaling, the previous equation becomes
\begin{equation*}
	\frac{\partial^2u}{\partial t^2}+ \frac{\partial^4u}{\partial x^4}=f(u).
\end{equation*}
Starting from \eqref{01}, a lot of papers are devoted to the study of models coming from the previous one. In particular, in \cite{russell1} and \cite{russell2} Russell notes that  viscous damping models such as
	\begin{equation*}
		\rho\frac{\partial^2u}{\partial t^2}+2\frac{\partial u}{\partial t}+	\frac{\partial^2}{\partial x^2}\Biggl (EI	\frac{\partial^2u}{\partial x^2} \Biggr )=0,
	\end{equation*}
	which produce uniform damping rates, are inadequate if experimentally observed damping
properties are to be incorporated in the model. At the end of the
nineteenth century, Kelvin and Voigt note  that damping rates tend to increase with frequency. Incorporated into
the Euler-Bernoulli beam model their approach yields an equation of the form
	\begin{equation*}
		\rho\frac{\partial^2u}{\partial t^2}+2\rho\frac{\partial^3}{\partial t\partial x^2}\Biggl (EI	\frac{\partial^2u}{\partial x^2} \Biggr )+	\frac{\partial^2}{\partial x^2}\Biggl (EI	\frac{\partial^2u}{\partial x^2} \Biggr )=0.
	\end{equation*}
An interesting variant of a Kelvin-Voigt viscoelastic damped Euler-Bernoulli beam is represented by the following equation
\begin{equation}\label{wp}
	\frac{\partial^2u}{\partial t^2}+\frac{\partial^2}{\partial x^2}\Biggl (a\frac{\partial^3u}{\partial t\partial x^2}\Biggr )+\frac{\partial^2}{\partial x^2}\Biggl (a	\frac{\partial^2u}{\partial x^2}\Biggr )=0,
\end{equation}
for which we appreciate a deep connection with parabolic equations. Indeed, considering $f$ such that
\begin{equation*}
	\frac{\partial f}{\partial t}+\frac{\partial^2}{\partial x^2}\Biggl (a	\frac{\partial^2u}{\partial x^2}\Biggr )=0,
\end{equation*}
equation \eqref{wp} can be rewritten as
\begin{equation}\label{problem1}
	\frac{\partial u}{\partial t}+\frac{\partial^2}{\partial x^2}\Biggl (a	\frac{\partial^2u}{\partial x^2}\Biggr )=f.
\end{equation}
A problem similar to \eqref{problem1} can also be found in the study of thin films and phase field models, see \cite{elliott}. Indeed, in this case the general form is 
\[
u_t= \nabla\cdot(f(u)\nabla w),
\]
where
\[
w= -\gamma \Delta u + \varphi(u),
\]
$f$ is non negative, $u>0$ and both $f$ and $\varphi$ are smooth. If we take $\gamma =1$ and after some computations, one can rewrite the previous problem as \eqref{problem1} (for more details see \cite{bertozzi}).

In this respect \cite{CF} and \cite{3} are the first papers that deal with 
degenerate fourth order operators. In particular, in \cite{CF} we consider the operator $A_2$ defined in \eqref{operatori A_i}, while in   \cite{3} the operator $A_1$ is studied, both
with Dirichlet boundary conditions. We underline that while in \cite{CF} the degeneracy point belongs to $[0,1]$, in \cite{3} only the interior degenerate case is considered.
Moreover, in \cite{CF} the properties of $A_2$ are used to obtain the well posedness of the associated parabolic problem.

Motivated by the above problems, in this paper we study  the operators $A_i$, $i=1,2$, introduced in \eqref{operatori A_i} with Neumann boundary conditions. We admit two types of degeneracy for $a$, namely weak and strong degeneracy according to the following definitions:
\begin{Definition}
	The function $a$ is weakly degenerate if there exists a point $x_0\in [0,1]$ such that  $a(x_0)=0$, $a>0$ on $[0,1]\setminus \{x_0\}$, $a\in\mathcal{C}[0,1] \cap \mathcal{C}^1([0,1] \setminus \{x_0\})$ and $\frac{1}{a}\in L^1(0,1)$.
\end{Definition}
For example, as $a$, we can consider $a(x)=|x-x_0|^{\alpha}$, $0<\alpha<1$.
\begin{Definition}\label{def strong function}
	The function $a$ is strongly degenerate if there exists a point $x_0\in [0,1]$ such that  $a(x_0)=0$, $a>0$ on $[0,1]\setminus \{x_0\}$, $a\in \mathcal{C}^1([0,1] \setminus \{x_0\}) \cap W^{1, \infty}(0,1)$ and $\frac{1}{a}\notin L^1(0,1)$.
\end{Definition}
For example, as $a$, we can consider $a(x)=|x-x_0|^{\alpha}$, $\alpha\ge 1$.

Regarding the boundary terms considered in this paper, we underline that they are different according to the degeneracy of $a$ and to the {\it form} of the operator. Consider, for example, the operator  in divergence form $A_1$. If the degeneracy point $x_0$ belongs to $(0,1)$, we can consider the following boundary conditions
\begin{equation}\label{condition1}
\begin{cases}
u''(0)=u'''(0)=0,\\
u''(1)=u'''(1)=0.
\end{cases}
\end{equation}
Indeed, since $u$ is $\mathcal C^3$ in a neighbourhood of $x=0$ and $x=1$, the previous terms are well defined. If $x_0=0$, then at $x=1$ we can consider the same conditions as in \eqref{condition1}, but at $0$ we have to require other conditions since we do not know that $u \in \mathcal C^3$  at $x=0$. Actually, the natural boundary conditions at $x=0$ are
\[
(au'')(0)=(au'')'(0)=0,
\]
which make sense since we assume that $au'' \in H^2(0,1)$.
Analogously, if $x_0=1$. On the other hand, take the operator in non divergence form $A_2$; then the boundary conditions are given by \eqref{condition1}  whether the function $a$ is weakly or strongly degenerate and $x_0 \in (0,1)$. Actually \eqref{condition1} are the natural boundary conditions even if $x_0=0$ or $x_0=1$ and the function $a$ is weakly degenerate; on the other hand, in the strongly degenerate case, we require
\[
\begin{cases}
u''(0)=u'''(0)=u''(1)=u(1)=0, & \text{if} \; x_0=1,\\
u''(1)=u'''(1)=u''(0)=u(0)=0, & \text{if} \; x_0=0,
\end{cases}
\]
(see Sections \ref{Sec2} and \ref{Sec3} for more details).

The paper is constituted by two main sections in which we study  the operators in divergence and non divergence form with suitable boundary conditions. We then apply the properties of the operators $A_i$, $i=1,2$, to study the well posedness of the following parabolic problems
\[
	\begin{cases}
		\frac{\partial u}{\partial t}(t,x)+A_iu(t,x)=h(t,x), &(t,x)\in (0,T) \times (0,1),\\
		u(0,x)=u_0(x),&x\in(0,1),
	\end{cases}
\]
where  $T>0$, $u_0$ belongs to a suitable Hilbert space $H$ and $h\in L^2(0,T;H)$.  In particular, $H$ denotes the Hilbert space $L^2(0,1)$, in the divergence form (i.e., $i=1$), and a suitable weighted Hilbert space, in the non divergence one (i.e., $i=2$).

A final comment on the notations: by $C$ we shall denote universal positive constants, which are allowed to vary from line to line; we denote with $u'$ the derivative of a function $u$ depending only on
one variable x, which we assume to vary in $[0, 1]$, while $u_t$, $u_{x}$ are the usual partial derivatives $\frac{\partial u}{\partial t}$, $\frac{\partial u}{\partial x}$, respectively, of a function $u=u(t,x)$.

It is worth noting that in the present work we deal with real function spaces, but the assertions can be easily extended to the complex case.

\section{The operator in divergence form}\label{Sec2}
In this section we consider the operator $A_1u:= (au_{xx})_{xx}$
with suitable boundary conditions and we prove its generation property.

To this aim we discuss two situations: the weakly degenerate case and the strongly degenerate one.

\subsection{Weakly degenerate setting}\label{Section 2}
Throughout this subsection we assume that the function $a$ is weakly degenerate. 

In order to deal with the generation property of  $A_1$ we have to consider a suitable domain that will be defined later. To this aim,
inspired by \cite{ACF}, \cite{FGGR} or \cite{fm2013}, we introduce the following (weighted) Hilbert spaces:
\[\begin{aligned}
	H^i_a(0,1):=\{u\in H^{i-1}(0,1):& \; u^{(i-1)} \text{ is absolutely continuous in [0,1]},\\
	& \sqrt{a}u^{(i)}\in L^2(0,1)\},
\end{aligned}\]
endowed with the norms
\begin{equation}\label{normadiv}
	\|u\|^2_{H^i_a(0,1)}:= \sum_{j=0}^{i-1}\|u^{(j)}\|^2_{L^2(0,1)}+ \|\sqrt{a}u^{(i)}\|^2_{L^2(0,1)} \quad \forall \; u \in H^i_a(0,1),
\end{equation}
$i=1,2$. Here  $H^0(0,1):= L^2(0,1)$ and $u^{(0)}=u$. Observe that $H^1_a(0,1)$ is exactly the space considered for the first time in \cite{CMP}.
Furthermore, in \cite[Proposition 2.1]{CF-articolo Wentzell} it is proved that for every $u\in H^i_a(0,1)$ the norm $\|u\|^2_{H^i_{a}(0,1)}$ is equivalent to the following one
\begin{equation}\label{norma tre stanghette}
|||u|||^2:= \|u\|^2_{L^2(0,1)} +  \|\sqrt{a}u^{(i)}\|^2_{L^2(0,1)},
\end{equation}
$i=1,2$.
Then, introducing
\begin{equation*}
	{\mathcal Z}(0,1):=\{u\in H^2_a(0,1): au''\in H^2(0,1)\},
\end{equation*}
define the operator $A_1$ by 
\begin{equation}\label{ope1}
	\begin{split}
		A_1u:&=(au'')'',\\
		\forall\; u\in D(A_1):=\{u\in {\mathcal Z}(0,1):\, u''(0)&=u''(1)=0, u'''(0)=u'''(1)=0\},
	\end{split}
\end{equation}
if $x_0\in (0,1)$.
If $x_0\in \{0,1\}$, we consider natural boundary conditions which arise by computation (see Theorem \ref{teorema generazione (div)}). Indeed, the definition of the domain of the operator is slightly different.
More precisely, if $x_0=0$, we consider  as $D(A_1)$ the set
\begin{equation*}
	\begin{split}
		D(A_1):=\{u\in {\mathcal Z}(0,1): u''(1)=u'''(1)=0, (au'')(0)=(au'')'(0)=0\},
	\end{split}
\end{equation*}
while, if $x_0=1$, the domain becomes
\begin{equation*}
	\begin{split}
		D(A_1):=\{u\in {\mathcal Z}(0,1): u''(0)=u'''(0)=0, (au'')(1)=(au'')'(1)=0\}.
	\end{split}
\end{equation*}
Note that in \cite{ACF} a degenerate operator of a second order in divergence form with a Neumann boundary condition of the type $(au_x)(t,0)=0$ and $u(t,1)=0$ is considered  if $x_0=0$; we refer to \cite{BFM} if $x_0 \in (0,1)$.

To prove that $(A_1, D(A_1))$ generates a strongly continuous semigroup the following formula of integration by parts is essential:
\begin{Lemma}\label{lemma gauss-green}
	For all $(u,v)\in {\mathcal Z}(0,1) \times H^2_a(0,1)$ one has
	\begin{equation}\label{gf0}
		\int_{0}^{1}(au'')''v\,dx=[(au'')'v]^{x=1}_{x=0}-[au''v']^{x=1}_{x=0}+\int_{0}^{1}au''v''dx.
	\end{equation}
\end{Lemma}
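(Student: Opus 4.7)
The plan is to derive \eqref{gf0} by applying the classical integration-by-parts formula for absolutely continuous functions twice, after first checking that every term in the identity is well-defined despite the possible degeneracy of $a$ at $x_0$.

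First I would unpack the regularity encoded in the function spaces. Since $u \in \mathcal Z(0,1)$, we have $au'' \in H^2(0,1) \hookrightarrow \mathcal C^1([0,1])$, so both $au''$ and $(au'')'$ are absolutely continuous on $[0,1]$ with finite traces at $0$ and $1$, and $(au'')'' \in L^2(0,1)$. Since $v \in H^2_a(0,1)$, we have $v \in H^1(0,1)$ with $v'$ absolutely continuous on $[0,1]$, so $v$ and $v'$ also have well-defined pointwise boundary values. Hence all four boundary terms $(au'')'(0)v(0)$, $(au'')'(1)v(1)$, $(au'')(0)v'(0)$, $(au'')(1)v'(1)$ make sense. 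The remaining issue is the integral $\int_0^1 au'' v''\, dx$: although $u''$ and $v''$ may individually fail to be integrable near $x_0$, we can write
\begin{equation*}
\int_0^1 au''v''\, dx = \int_0^1 \bigl(\sqrt{a}\,u''\bigr)\bigl(\sqrt{a}\,v''\bigr)\, dx,
\end{equation*}
and this is finite by the Cauchy--Schwarz inequality, since $\sqrt{a}\,u'', \sqrt{a}\,v'' \in L^2(0,1)$ by definition of $H^2_a(0,1)$.

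Next I would perform the integrations by parts. Since $(au'')'$ and $v$ are both absolutely continuous on $[0,1]$, the product rule and the fundamental theorem of calculus yield
\begin{equation*}
\int_0^1 (au'')'' v\, dx = \bigl[(au'')' v\bigr]_0^1 - \int_0^1 (au'')' v'\, dx.
\end{equation*}
For the remaining integral I apply integration by parts once more, this time to the pair $(au'')$ and $v'$, both of which are absolutely continuous on $[0,1]$ (the former because $au'' \in H^2(0,1)$, the latter by definition of $H^2_a(0,1)$). This gives
\begin{equation*}
\int_0^1 (au'')' v'\, dx = \bigl[au''\, v'\bigr]_0^1 - \int_0^1 au''\, v''\, dx,
\end{equation*}
where the last integral is the one whose finiteness was justified above. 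Substituting into the previous identity yields \eqref{gf0}.

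The only genuinely delicate point is the one addressed in the first paragraph: making sense of $\int_0^1 au''v''\, dx$ and of the boundary traces in the presence of the degeneracy of $a$ at $x_0$. Once one notices that everything hinges on the quantity $au''$ (which lies in $H^2(0,1)$ and so behaves classically), and that the product $au''v''$ is controlled through the splitting $\sqrt{a}\,u'' \cdot \sqrt{a}\,v''$, the rest reduces to two routine applications of integration by parts for absolutely continuous functions on $[0,1]$.
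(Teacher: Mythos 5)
Your proposal is correct and follows essentially the same route as the paper: two applications of integration by parts using the absolute continuity of $au''$, $(au'')'$ (from $au''\in H^2(0,1)$) and of $v$, $v'$ (from the definition of $H^2_a(0,1)$ in the weakly degenerate case), together with the observation that $au''v''=\sqrt{a}u''\cdot\sqrt{a}v''\in L^1(0,1)$ by Cauchy--Schwarz. No gaps; your extra remarks on the boundary traces only make explicit what the paper leaves implicit.
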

\begin{proof}
	Take $u \in {\mathcal Z}(0,1)$ and $v \in H^2_a(0,1)$. Clearly, $au'', (au'')' \in H^1(0,1)$ and $v, v'$ are absolutely continuous;
	hence
	\[
	\begin{aligned}
		\int_0^1 (au'')'' v\,dx&=  [(au'')'v]_{x=0}^{x=1}-\int_0^1 (au'')'v'dx\\
		&=  [(au'')'v]_{x=0}^{x=1}-[au''v']_{x=0}^{x=1}+\int_0^1 au''v''dx.
	\end{aligned}
	\]
	Observe that $au''v''  \in L^1(0,1)$ since, by assumption,  $\sqrt{a}u''$ and $\sqrt{a}v''$ belong to $L^2(0,1)$.
\end{proof}

We underline that in order to prove Lemma \ref{lemma gauss-green}, we do not use the assumption $\frac{1}{a} \in L^1(0,1)$. It is sufficient to require $a\in \mathcal C[0,1]$.

\begin{Remark}\label{remark 1}
	Note that:
	\begin{enumerate}
		\item[1)] if $u\in D(A_1)$ and $v\in H^2_a(0,1)$, then the boundary terms $[(au'')'v]^{x=1}_{x=0}$ and $[au''v']^{x=1}_{x=0}$ are equal to $0$ and (\ref{gf0}) becomes
		\begin{equation}\label{gf0 senza bordo}
			\int_{0}^{1}(au'')''v\,dx=\int_{0}^{1}au''v''dx;
		\end{equation}
		\item[2)] the boundary conditions prescribed in the domain $D(A_1)$ play a crucial role to prove the generation property of the operator. Indeed, the proof of the next theorem is based on  (\ref{gf0 senza bordo}). 
	\end{enumerate}
\end{Remark}

The next result holds.
\begin{Theorem}\label{teorema generazione (div)}
	The operator $A_1:D(A_1)\to L^2(0,1)$ is self-adjoint and non negative on $L^2(0,1)$ with dense domain. Hence $-A_1$ generates a contractive analytic semigroup of angle $\frac{\pi}{2}$ on $L^2(0,1)$.
\end{Theorem}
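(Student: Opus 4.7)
My plan is to verify that $A_1$ is densely defined, symmetric, non-negative, and that $I+A_1$ is surjective onto $L^2(0,1)$. By a standard criterion (a densely defined non-negative symmetric operator whose range of $I+A$ is the whole space is self-adjoint), these properties imply $A_1$ is self-adjoint. A non-negative self-adjoint operator on a Hilbert space automatically generates a contractive analytic semigroup of angle $\pi/2$ through the functional calculus, so the whole conclusion follows at once from those four facts.

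\textbf{Symmetry, non-negativity, density.} Applying Remark \ref{remark 1}(1) to the Gauss--Green formula (\ref{gf0}), for every $u,v \in D(A_1)$ one has
\[
\langle A_1 u, v\rangle_{L^2} = \int_0^1 a\, u'' v''\,dx,
\]
which is symmetric in $u,v$; taking $v=u$ gives $\langle A_1 u, u\rangle_{L^2} = \int_0^1 a(u'')^2\,dx \geq 0$. Density of $D(A_1)$ in $L^2(0,1)$ follows from the inclusion of $C_c^\infty((0,1)\setminus\{x_0\})$ in $D(A_1)$ (functions supported in a compact subset of $(0,1)\setminus\{x_0\}$ vanish near the boundary and near the degeneracy, so they satisfy all the boundary conditions in each of the three geometric cases).

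\textbf{Surjectivity of $I + A_1$.} Given $f \in L^2(0,1)$ I would apply the Lax--Milgram theorem to the bilinear form
\[
\mathfrak{a}(u,v) := \int_0^1 uv\,dx + \int_0^1 au''v''\,dx
\]
on $H^2_a(0,1) \times H^2_a(0,1)$, which is bilinear, continuous, and coercive with respect to the equivalent norm (\ref{norma tre stanghette}) with $i=2$ (since $\mathfrak{a}(u,u) = |||u|||^2$). This yields a unique $u \in H^2_a(0,1)$ satisfying $\mathfrak{a}(u,v) = \langle f, v\rangle_{L^2}$ for all $v \in H^2_a(0,1)$. Testing first with $v \in C_c^\infty(0,1)$ gives $(au'')'' = f - u$ in the sense of distributions, so $au'' \in H^2(0,1)$ and thus $u \in \mathcal{Z}(0,1)$. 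Plugging this back into the variational identity and reversing Lemma \ref{lemma gauss-green} on the term $\int_0^1 au''v''\,dx$, the bulk integrals cancel and leave
\[
[(au'')'v]_0^1 - [au''v']_0^1 = 0 \qquad \text{for every } v \in H^2_a(0,1).
\]
Choosing $v$ so as to independently prescribe the admissible boundary values $v(0), v(1), v'(0), v'(1)$ forces the natural boundary conditions appearing in $D(A_1)$, showing $u \in D(A_1)$ and $u + A_1 u = f$.

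\textbf{Main obstacle.} The delicate point is precisely this last identification, because the geometry of $D(A_1)$ depends on the location of $x_0$. Away from $x_0$, functions in $H^2_a(0,1)$ are locally $H^2$, so all four values $v(i), v'(i)$ can be prescribed and the boundary identity forces $u''(i) = u'''(i)=0$ there. At a degenerate endpoint $x_0 \in \{0,1\}$, one only has $\sqrt{a}v''\in L^2$ near $x_0$, so $v''$ may blow up; however $v, v'$ remain absolutely continuous up to the boundary by the very definition of $H^2_a(0,1)$, and the weak-degeneracy assumption $1/a \in L^1$ lets one build explicit polynomial or cut-off test functions realizing arbitrary values of $v(x_0), v'(x_0)$. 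This is what precisely extracts the modified natural conditions $(au'')(x_0) = (au'')'(x_0) = 0$ (and explains why the domain definition changes in the boundary-degenerate case). The careful bookkeeping of admissible test functions at the degenerate endpoint is the only real content of the argument; everything else is automatic.
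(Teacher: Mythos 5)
Your overall route --- symmetry and non-negativity from the Gauss--Green formula, surjectivity of $I+A_1$ via Lax--Milgram on $H^2_a(0,1)$, recovery of $au''\in H^2(0,1)$ by testing against $\mathcal{C}^\infty_c(0,1)$, and identification of the boundary conditions from the vanishing boundary terms --- is exactly the paper's argument, and those parts are sound. The genuine gap is the density step. You claim $\mathcal{C}^\infty_c\bigl((0,1)\setminus\{x_0\}\bigr)\subset D(A_1)$, but membership in $D(A_1)$ requires $au''\in H^2(0,1)$, i.e. that $(au'')''=a''u''+2a'u'''+au''''$ be an $L^2$ function on the support of $u''$. In the weakly degenerate setting $a$ is only assumed to be $\mathcal{C}^1$ on $[0,1]\setminus\{x_0\}$, so $a''$ need not exist even as a locally $L^2$ distribution, and a smooth compactly supported $u$ with $u''\not\equiv 0$ on a region where $a'$ is not weakly differentiable simply fails to belong to $\mathcal{Z}(0,1)$. (For the prototype $a(x)=|x-x_0|^\alpha$ your inclusion is true, but the theorem is stated for general weakly degenerate $a$.) Since the self-adjointness criterion you invoke takes density as a hypothesis, this is a real hole in the chain as written.

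The repair is cheap and is what the paper does: one never exhibits a dense subset of $D(A_1)$ by hand. $A_1$ is symmetric and monotone, and you have proved $(I+A_1)(D(A_1))=L^2(0,1)$; hence $A_1$ is maximal monotone, and a maximal monotone operator on a Hilbert space is automatically densely defined (the paper appeals to \cite[Corollary 3.20]{en} for the ensuing generation of a contraction semigroup); a symmetric maximal monotone operator is then self-adjoint, and the non-negative self-adjoint structure yields the contractive analytic semigroup of angle $\frac{\pi}{2}$ by the spectral theorem, exactly as you conclude. A minor remark on your last paragraph: at a degenerate endpoint no special construction of test functions is needed to prescribe $v(0),v(1),v'(0),v'(1)$, since polynomials already lie in $H^2_a(0,1)$ because $a$ is bounded, so $1/a\in L^1(0,1)$ plays no role there; the boundary identity directly gives $(au'')=(au'')'=0$ at both endpoints, and one divides by $a$ only at the non-degenerate endpoints to obtain $u''=u'''=0$, as in the paper.
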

\begin{proof}
	In order to show that $A_1$ is non negative and self-adjoint it is sufficient to prove that $A_1$ is symmetric, non negative and $(I+A_1)(D(A_1))=L^2(0,1)$. Indeed, if $A_1$ is non negative and $I+A_1$ is surjective on $D(A_1)$, then $A_1$ is maximal monotone and in this case $A_1$ is symmetric if and only if $A_1$ is self-adjoint. Moreover, by  \cite[Corollary 3.20]{en}, we have that
	$-A_1$ is densely defined and generates a contraction semigroup.
	
	\underline{$A_1$ is symmetric}: by (\ref{gf0 senza bordo}), for any $u,v\in D(A_1)$, one has
	\begin{equation*}
		\left\langle A_1u, v \right\rangle_{L^2(0,1)}=\int_{0}^{1} (a u'')''v\,dx=\int_{0}^{1}au''v''dx=\left\langle u, A_1v \right\rangle_{L^2(0,1)}.
	\end{equation*}
	\indent
	\underline{$A_1$ is non negative}: again by (\ref{gf0 senza bordo}), for any $u\in D(A_1)$, one has
	\begin{equation*}
		\left\langle A_1u, u \right\rangle_{L^2(0,1)}=\int_{0}^{1}(au'')''u\,dx=\int_{0}^{1} a(u'')^2dx\ge 0.
	\end{equation*}
	\indent
	\underline{$I+A_1$ is surjective}: observe that $H^2_{a}(0, 1)$ equipped with the inner product
	\begin{equation*}
		\left\langle u, v \right\rangle_{H^2_{a}(0, 1)} := \int_{0}^{1} \biggl (u v+au''v''\biggr ) dx,\quad \,\,\,\,\,\,\,\forall\; u, v \in H^2_{a}(0, 1),
	\end{equation*} 
	is a Hilbert space. Moreover
	\begin{equation*}
		H^2_{a}(0, 1) \hookrightarrow L^2(0, 1) \hookrightarrow \left(H^2_{a}(0, 1)\right)^*,
	\end{equation*}
	where $\left(H^2_{a}(0, 1)\right)^*$ is the dual space of $H^2_{a}(0, 1)$ with respect to $L^2(0, 1)$. Now, for $f \in L^2(0, 1)$, define the functional $F\in \left(H^2_{a}(0, 1)\right)^*$ given by 
	\begin{equation*}
		F(v):= \int_{0}^{1}f v\,dx,\quad \,\,\,\,\,\,\,\forall \;v\in H^2_{a}(0, 1).
	\end{equation*}
	Consequently, by the Lax-Milgram Theorem, there exists a unique $u\in H^2_{a}(0, 1)$ such that
	\begin{equation}\label{Consequence Riesz's Theorem}
		\left\langle u, v \right\rangle_{H^2_{a}(0, 1)}=\int_{0}^{1} f v\,dx \Longleftrightarrow \int_{0}^{1} \biggl (u v+au''v''\biggr ) dx=\int_{0}^{1} f v\,dx,
	\end{equation}
for all $v\in H^2_{a}(0, 1)$, being $\left\langle \cdot, \cdot \right\rangle_{H^2_{a}(0, 1)}$ the scalar product associated to $|||\cdot |||$.
	In particular, since $\mathcal{C}^{\infty}_c(0, 1) \subset H^2_{a}(0, 1)$, (\ref{Consequence Riesz's Theorem}) becomes
	\begin{equation*}
		\int_{0}^{1} au''v''dx = \int_{0}^{1} (f-u)v\,dx,\quad \,\,\,\,\,\,\,\forall	\;v\;\in \mathcal{C}^{\infty}_c(0, 1).
	\end{equation*}
	Thus, the distributional second derivative of $au''$ is equal to $f-u$ a.e. in $(0,1)$ and it is a function in $L^2(0,1)$, that is $au''\in H^2(0,1)$ \cite[Lemma 2.1]{CF} and $u+A_1u=f$.  This implies that $au''$ and $(au'')'$ are continuous functions in $[0,1]$ and, in particular, $u\in {\mathcal Z}(0,1)$. Moreover, it is possible to prove that 
	\[
	\begin{cases}
	u''(0)=u''(1)=u'''(0)=u'''(1)=0, & x_0 \in (0,1),\\
	u''(1)=u'''(1)=0, (au'')(0)=(au'')'(0)=0, & x_0=0,\\
	u''(0)=u'''(0)=0, (au'')(1)=(au'')'(1)=0, & x_0=1.
	\end{cases}
	\] In fact, by  (\ref{gf0}) and (\ref{Consequence Riesz's Theorem}), one has that for all $u\in {\mathcal Z}(0,1)$ and $v\in H^2_a(0,1)$ the following relation holds:
	\begin{equation*}
		\int_0^1(au'')''v\,dx-[(au'')'v]^{x=1}_{x=0}+[au''v']^{x=1}_{x=0}=\int_{0}^{1} au''v''dx = \int_{0}^{1} (f-u)v\,dx.
	\end{equation*}
	Thus, $[(au'')'v]_{x=0}^{x=1}-[au''v']_{x=0}^{x=1}=0$ for all $v\in H^2_{a}(0, 1)$. Hence, one can conclude that 
	\begin{equation}\label{co}
		(au'')'(1)=(au'')'(0)=0
		\quad \text{  and  } \quad
		(au'')(1)=(au'')(0)=0.
	\end{equation}
	
	Now, just to fix the idea, assume $x_0\in (0,1)$ and set $m:=au''\in H^2(0,1)$. Since $a (x)\neq 0$ for all $x \neq x_0$, we have
	\begin{equation*}
		m(x)=a(x)u''(x) \Longleftrightarrow \frac{m(x)}{a(x)}=u''(x), \quad \forall \; x \in [0,1] \setminus \{x_0\}.
	\end{equation*}
	Hence $u''(0)= \frac{m(0)}{a(0)} =0$ and $u''(1)= \frac{m(1)}{a(1)} =0$. Moreover,
	\begin{equation*}
		(au'')'(x)=a'(x)u''(x)+a(x)u'''(x), \quad \forall \; x \in [0,1] \setminus \{x_0\},
	\end{equation*}
	and, in particular,
	\begin{equation*}
		(au'')'(1)=0 \Longleftrightarrow a'(1)u''(1)+a(1)u'''(1)=0.
	\end{equation*}
	Analogously,
	$
		a'(0)u''(0)+a(0)u'''(0)=0.
$
	However, we know that $u''(1)=0=u''(0)$ and $a(1)\neq 0 \neq a(0)$, thus 
	$
		u'''(1)=u'''(0)=0.
$
	Thus, we can conclude that if $x_0 \in (0,1)$, then
	\begin{equation}\label{su}
		u\in D(A_1)\,\,\,\,\text{ and }\,\,\,\,(I+A_1)(u)=f.
	\end{equation}
	On the other hand, if $x_0=0$, proceeding as before, we have $u''(1)=0=u'''(1)$. This fact, together with \eqref{co}, implies \eqref{su}.
	The same result follows if $x_0=1$.
	
	In every case, as an immediate consequence of the Stone-von Neumann Spectral Theorem and functional calculus associated with the spectral theorem, one has that the operator $(A_1,D(A_1))$ generates a cosine family and an analytic semigroup of angle $\displaystyle\frac{\pi}{2}$ on $L^2(0, 1)$.
\end{proof}

As a consequence of the previous generation property, one has the following well posedness result which completes the discussion of the problem in the weakly degenerate case giving information on the regularity of the solution itself.

First of all, recall the following definition:
\begin{Definition}
	If $u_0 \in L^2(0,1)$ and $h\in L^2(0,T;L^2(0,1))$, a function $u$ is said to be a weak solution of 
	\begin{equation}\label{problema1}
	\begin{cases}
		u_t(t,x)+(au_{xx})_{xx}(t,x)=h(t,x), &(t,x)\in (0,T) \times (0,1),\\
		u(0,x)=u_0(x),&x\in(0,1),
	\end{cases}
\end{equation}
	 with the following boundary conditions
	\[
	(BC)\quad	\begin{cases}
		u_{xx}(t,0)=u_{xxx}(t,0)=u_{xx}(t,1)=u_{xxx}(t,1)=0, &x_0 \in (0,1),\\
		u_{xx}(t,1)=u_{xxx}(t,1)=(au_{xx})(t,0)= (au_{xx})_x(t,0)=0, &x_0=0,\\
		u_{xx}(t,0)=u_{xxx}(t,0)=(au_{xx})(t,1)= (au_{xx})_x(t,1)=0, &x_0=1,
	\end{cases}
	\]
	if
	\begin{equation*}
		u\in \mathcal{C}\Bigl([0,T];L^2(0, 1)\Bigr )\cap L^2\Bigl (0,T;H^2_{a}(0, 1) \Bigr )
	\end{equation*}
	and
	\begin{equation*}
		\begin{split}
			&	\int_0^1u(T,x)\varphi(T,x)\,dx-\int_0^1u_0(x)\varphi(0,x)\,dx-\int_0^T\int_0^1u(t,x)\varphi_t(t,x)\,dx\,dt =\\
			&- \int_0^T\int_0^1a(x)u_{xx}(t,x)\varphi_{xx}(t,x)\,dx\,dt+\int_0^T\int_0^1h(t,x)\varphi(t,x)\,dx\,dt
		\end{split}
	\end{equation*}
	for all $\varphi\in H^1(0,T;L^2(0,1))\cap L^2(0,T;H^2_{a}(0, 1))$.
\end{Definition}
Using the semigroup technique one has the next result.
\begin{Theorem}\label{teo exist and regul caso div}
	For all $h \in L^2(0,T;L^2(0,1))$ and $u_0\in L^2(0,1)$, there exists a unique solution
	\begin{equation*}
		u\in \mathcal{C}\Bigl([0,T];L^2(0, 1)\Bigr )\cap L^2\Bigl (0,T;H^2_{a}(0, 1) \Bigr )
	\end{equation*}
	of \eqref{problema1} satisfying $(BC)$ such that
	\[
		\sup_{t\in [0,T]}\Vert u(t)\Vert^2_{L^2(0, 1)}+\int_0^T\Vert u(t)\Vert^2_{H^2_{a}(0, 1)}dt\le C_T\Bigl (\Vert u_0\Vert^2_{L^2(0, 1)}+\Vert h\Vert^2_{L^2(0,T;L^2(0,1))} \Bigr )
	\]
	for some positive constant $C_T$. Moreover, if $h\in W^{1,1}(0,T;L^2(0,1))$ and $u_0\in H^2_a(0,1)$, then $
		u\in \mathcal{C}^1\Bigl([0,T];L^2(0, 1)\Bigr )\cap \mathcal{C}\Bigl ([0,T];D(A_1) \Bigr).
	$
\end{Theorem}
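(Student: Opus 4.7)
The plan is to reduce \eqref{problema1} with the boundary conditions $(BC)$ to the abstract Cauchy problem $u'(t) + A_1 u(t) = h(t)$, $u(0)=u_0$, in $L^2(0,1)$ and to exploit the generation property established in Theorem \ref{teorema generazione (div)}. Since $-A_1$ generates a contractive analytic semigroup $\{S(t)\}_{t\ge 0}$ on $L^2(0,1)$, I would first invoke the standard theory of inhomogeneous abstract Cauchy problems to produce a unique mild solution
\[
u(t) = S(t)u_0 + \int_0^t S(t-s)h(s)\,ds \in \mathcal{C}([0,T];L^2(0,1)),
\]
which settles the continuity-in-time part of the first assertion.

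Next, to obtain $u\in L^2(0,T;H^2_a(0,1))$ together with the quantitative estimate, I would first work with regular data $(h_n,u_{0,n})$ with $u_{0,n}\in D(A_1)$ and $h_n\in \mathcal{C}^1([0,T];L^2(0,1))$ approximating $(h,u_0)$, so that the corresponding $u_n$ is a classical solution and may be tested against itself. Using identity (\ref{gf0 senza bordo}) from Remark \ref{remark 1}, the pairing $\langle A_1u_n,u_n\rangle_{L^2}$ reduces to $\|\sqrt{a}\,(u_n)_{xx}\|_{L^2(0,1)}^2$, and Young's inequality on the forcing term combined with Gronwall's lemma yields
\[
\sup_{t\in[0,T]} \|u_n(t)\|^2_{L^2(0,1)} + \int_0^T \|\sqrt{a}\,(u_n)_{xx}(t)\|^2_{L^2(0,1)}\,dt \le C_T \bigl(\|u_{0,n}\|^2_{L^2(0,1)} + \|h_n\|^2_{L^2(0,T;L^2(0,1))}\bigr).
\]
The equivalence between $|||\cdot|||$ and the full $H^2_a$-norm given by (\ref{norma tre stanghette}) upgrades this to a bound in the $H^2_a$ norm, and passing to the limit $n\to\infty$ (with lower semicontinuity and uniqueness of the mild solution) transfers the estimate to $u$ itself.

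The weak formulation is then verified by multiplying the equation by an arbitrary $\varphi\in H^1(0,T;L^2(0,1))\cap L^2(0,T;H^2_a(0,1))$, integrating in $t$ by parts, and applying Lemma \ref{lemma gauss-green} in $x$; since $u(t)\in D(A_1)$ for almost every $t$ and $\varphi(t,\cdot)\in H^2_a(0,1)$, Remark \ref{remark 1} ensures the boundary contributions vanish in all three geometric cases $x_0\in(0,1)$, $x_0=0$, $x_0=1$. Finally, for the extra regularity with $h\in W^{1,1}(0,T;L^2(0,1))$ and $u_0\in H^2_a(0,1)$ (read together with the boundary compatibility built into $D(A_1)$), the classical inhomogeneous-Cauchy theorem for analytic semigroups gives $u\in\mathcal{C}^1([0,T];L^2(0,1))\cap\mathcal{C}([0,T];D(A_1))$. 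The step I expect to be the real obstacle is not the abstract semigroup machinery, but the careful bookkeeping needed to verify the integration-by-parts identity for the three different boundary-condition regimes and for all admissible test functions in $H^2_a(0,1)$, especially at $x_0\in\{0,1\}$ where $D(A_1)$ uses the natural conditions $(au'')(x_0)=(au'')'(x_0)=0$ rather than pointwise conditions on $u''$, $u'''$.
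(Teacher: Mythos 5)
Your proposal is correct and follows essentially the same route as the paper, which proves Theorem \ref{teo exist and regul caso div} simply by invoking the semigroup technique made available by Theorem \ref{teorema generazione (div)}: the mild-solution representation, the energy estimate via \eqref{gf0 senza bordo} and the norm equivalence \eqref{norma tre stanghette}, and the classical regularity result for $h\in W^{1,1}(0,T;L^2(0,1))$ are exactly the standard ingredients the authors have in mind. Your elaboration (approximation by regular data, Gronwall, verification of the weak formulation through Lemma \ref{lemma gauss-green} and Remark \ref{remark 1}) is a faithful expansion of that argument rather than a different approach.
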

\subsection{Strongly degenerate setting}
In this subsection we assume that the function $a$ is strongly degenerate. Inspired again by \cite{ACF}, \cite{FGGR} and \cite{fm2013}, we introduce the weighted spaces
\begin{equation}\label{space}
	\begin{aligned}
		H^i_a(0,1):=\{u\in H^{i-1}(0,1):& \; u^{(i-1)} \text{ is locally absolutely continuous in}\\
		& [0,1]\setminus\{x_0\} \text { and } \sqrt{a}u^{(i)}\in L^2(0,1)\},
\end{aligned}\end{equation}
 $i=1,2$, equipped with the norms \eqref{normadiv}. We consider the operator $A_1$ defined in \eqref{ope1} with the same domain as in the weakly degenerate case.
Here ${\mathcal Z}(0,1)$ is as in Section \ref{Section 2}, where in this case the space $H^2_a(0,1)$ is the one defined in \eqref{space}. Thus, since $u\in {\mathcal Z}(0,1)$, $u'$ is locally absolutely continuous in $[0,1]\setminus \{x_0\}$ and not absolutely continuous in $[0,1]$ as for the weakly degenerate case, thus equality (\ref{gf0}) is not true a priori. An idea to prove the Gauss-Green formula given in Lemma \ref{lemma gauss-green} is to characterize $\mathcal Z(0,1)$ and hence $D(A_1)$. 
To do this, define
\begin{equation*}
	\begin{split}
		X:=\{u\in H^1(0,1):\,
		&\text{$u'$ is locally absolutely continuous in $[0,1]\setminus\{x_0\}$},\\
		&au, au' \in H^1(0,1), au''\in H^2(0,1), \sqrt{a}u''\in L^2(0,1),\\
		&(au^{(k)})(x_0)=0,\, \text{for all $k=0,1,2$}
		\}.
	\end{split}
\end{equation*}
Using the definition of $X$ one can easily have the next property.
\begin{Lemma}\label{lemma2.2} For all $u \in X$ we have that
	\begin{enumerate}
		\item $|a(x)u(x)| \le \|(au)'\|_{L^2(0,1)}\sqrt{|x-x_0|}$,
		\item $|a(x)u'(x)| \le \|(au')'\|_{L^2(0,1)}\sqrt{|x-x_0|}$,
		\item $|a(x)u''(x)| \le \|(au'')'\|_{L^2(0,1)}\sqrt{|x-x_0|}$
	\end{enumerate}
	for all $x \in [0,1]$.
\end{Lemma}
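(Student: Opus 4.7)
The plan is straightforward: each of the three inequalities has the same shape, and each follows from representing $au^{(k)}$ as an integral of its distributional derivative starting from the degeneracy point $x_0$, then applying Cauchy--Schwarz.

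First I would fix $u \in X$ and $k \in \{0,1,2\}$, and note two facts that come directly from the definition of $X$: (i) $au^{(k)} \in H^1(0,1)$, so in particular $au^{(k)}$ is absolutely continuous on $[0,1]$ and agrees almost everywhere with the pointwise representative given by the fundamental theorem of calculus, and (ii) $(au^{(k)})(x_0)=0$. Combining (i) and (ii), for every $x\in[0,1]$,
\begin{equation*}
a(x)u^{(k)}(x) = (au^{(k)})(x)-(au^{(k)})(x_0) = \int_{x_0}^{x}(au^{(k)})'(s)\,ds.
\end{equation*}

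Next, taking absolute values and applying the Cauchy--Schwarz inequality on the (oriented) interval between $x_0$ and $x$ gives
\begin{equation*}
|a(x)u^{(k)}(x)| \le \left|\int_{x_0}^{x}|(au^{(k)})'(s)|\,ds\right| \le \sqrt{|x-x_0|}\,\Bigl(\int_{0}^{1}|(au^{(k)})'(s)|^2\,ds\Bigr)^{1/2},
\end{equation*}
which is precisely the stated bound with $\|(au^{(k)})'\|_{L^2(0,1)}$ on the right. Applying this for $k=0,1,2$ yields statements (1), (2), (3), respectively.

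I do not anticipate any genuine obstacle here: the only subtlety is justifying the use of the fundamental theorem of calculus at the degenerate point $x_0$, but this is exactly what the membership $au^{(k)} \in H^1(0,1)$ guarantees (it implies absolute continuity on all of $[0,1]$, not merely locally away from $x_0$), and the vanishing condition $(au^{(k)})(x_0)=0$ built into $X$ is precisely what kills the boundary term at $x_0$ so that we land on $\sqrt{|x-x_0|}$ rather than just $1$.
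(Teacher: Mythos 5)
Your argument is correct and coincides with the paper's own proof: the paper likewise writes $(au^{(k)})(x) = \int_{x_0}^{x}(au^{(k)})'(s)\,ds$ using the membership $au^{(k)}\in H^1(0,1)$ and the condition $(au^{(k)})(x_0)=0$ from the definition of $X$, and then applies Cauchy--Schwarz (the paper only details $k=2$, noting the other cases are similar). No gaps.
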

\begin{proof} We will prove only the last point, the proof of the other points being similar. Let $u \in X$. Since $(au'')(x_0)=0$, then
	\[
	|(au'')(x)|= \left|\int_{x_0}^x (au'')'(s)ds\right| \le
	\|(au'')'\|_{L^2(0,1)}\sqrt{|x-x_0|},
	\]
	for all $ x \in [0,1]$. 
\end{proof}

Using the assumptions on $a$, in particular the fact that $\ds\frac{1}{a} \not\in L^1(0,1)$, one can prove the next characterization.
\begin{Proposition}\label{Proposition 2.1}
	The spaces $X$ and $\mathcal Z(0,1)$ coincide.
\end{Proposition}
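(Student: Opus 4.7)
The plan is to establish both inclusions $X \subseteq \mathcal{Z}(0,1)$ and $\mathcal{Z}(0,1) \subseteq X$. The first is essentially a bookkeeping exercise: if $u \in X$ then $u \in H^1(0,1)$ with $u'$ locally absolutely continuous on $[0,1]\setminus\{x_0\}$ and $\sqrt{a}u'' \in L^2(0,1)$, so $u \in H^2_a(0,1)$; combined with the membership $au'' \in H^2(0,1)$ that is already built into $X$, this yields $u \in \mathcal{Z}(0,1)$. For the reverse inclusion, starting from $u \in \mathcal{Z}(0,1)$ I have to verify the three regularity conditions $au, au' \in H^1(0,1)$ and the three vanishing conditions $(au^{(k)})(x_0)=0$ for $k=0,1,2$.

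For the regularity, I would simply invoke the product rule together with $a \in W^{1,\infty}(0,1)$. Since $u \in H^1(0,1)\hookrightarrow \mathcal{C}[0,1]$, one has $(au)' = a'u + au' \in L^2(0,1)$, so $au \in H^1(0,1)$; likewise $(au')' = a'u' + au''$, where $a'u'\in L^2(0,1)$ (with $a'\in L^\infty$, $u'\in L^2$) and $au'' \in H^2(0,1)\hookrightarrow L^2(0,1)$, so $au' \in H^1(0,1)$.

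For the three vanishing conditions the case $k=0$ is immediate, as $u \in \mathcal{C}[0,1]$ and $a(x_0)=0$. The cases $k=1,2$ are the core of the proof and rely crucially on the strong degeneracy hypothesis $\frac{1}{a} \notin L^1(0,1)$. For $k=2$, since $au'' \in H^2(0,1)\hookrightarrow \mathcal{C}[0,1]$, the value $(au'')(x_0)$ is well defined; if it were equal to some $c \neq 0$, by continuity $|au''| \ge |c|/2$ on a neighborhood $U$ of $x_0$, whence $a(u'')^2 \ge c^2/(4a)$ on $U$, giving $\int_U a(u'')^2 \, dx = +\infty$ and contradicting $\sqrt{a}u'' \in L^2(0,1)$. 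For $k=1$, $au' \in H^1(0,1)\hookrightarrow \mathcal{C}[0,1]$, so again the trace $(au')(x_0)$ is defined; assuming it equals $c\neq 0$, continuity yields $|u'|^2 \ge c^2/(4a^2)$ near $x_0$, and since $a \le M:=\|a\|_{L^\infty(0,1)}$ gives $1/a^2 \ge 1/(Ma)$, one obtains $\int 1/a^2 \, dx = +\infty$, contradicting $u' \in L^2(0,1)$.

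The main obstacle is the $k=1$ case. This is the only step where both ingredients of the strong degeneracy assumption are simultaneously needed: the boundedness of $a$ coming from $a \in W^{1,\infty}(0,1)$ is exactly what allows the passage from the failure of $L^1$ integrability of $1/a$ to the failure of $L^1$ integrability of $1/a^2$, which is the form required to contradict $u' \in L^2(0,1)$. Once this point is handled, the remaining items collapse into direct consequences of the definitions and of the Sobolev embeddings.
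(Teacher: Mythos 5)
Your inclusion $X\subseteq \mathcal Z(0,1)$, the product-rule argument for $au\in H^1(0,1)$, and the vanishing arguments for $k=0,2$ are sound and essentially coincide with the paper's. The genuine gap is the claim that $au'\in H^1(0,1)$ follows ``simply by the product rule.'' In the strongly degenerate setting $u'$ is \emph{not} a Sobolev function on $(0,1)$: it is only locally absolutely continuous on $[0,1]\setminus\{x_0\}$, and $u''$ need not be integrable near $x_0$ (one only knows $\sqrt a\,u''\in L^2(0,1)$, while $1/\sqrt a\notin L^2(0,1)$). Hence the identity $(au')'=a'u'+au''$ is available a priori only on $(0,x_0)\cup(x_0,1)$, and the fact that $a'u'+au''\in L^2(0,1)$ does not by itself give $au'\in H^1(0,1)$: the distributional derivative of $au'$ on all of $(0,1)$ could contain a Dirac mass at $x_0$ if $au'$ had different one-sided limits there. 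Consequently your step ``$au'\in H^1(0,1)\hookrightarrow\mathcal C[0,1]$, so the trace $(au')(x_0)$ is defined'' is circular, since it invokes precisely the membership that has not been established.

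This is exactly the point the paper labors over: it first proves that the one-sided limits $L=\lim_{x\to x_0^-}(au')(x)$ and $M=\lim_{x\to x_0^+}(au')(x)$ exist, by writing $(au')(x)=(au')(0)+\int_0^x(a u')'(t)\,dt$ to the left of $x_0$ (and the analogous formula from $x=1$ on the right), using $(au')'=a'u'+au''\in L^2$ on each side; it then shows $L=M=0$ by the same contradiction you use (if the limit were nonzero, then $(u')^2\ge C/a$ near $x_0$, which contradicts $u'\in L^2(0,1)$ because $\frac1a\notin L^1(0,1)$); and finally it checks, by splitting $\int_0^1(au')'\varphi\,dx$ at $x_0\pm h$ and letting $h\to0$, that $a'u'+au''$ really is the distributional derivative of $au'$, i.e.\ no singular part at $x_0$ appears, whence $au'\in H^1(0,1)$ and $(au')(x_0)=0$. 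Your contradiction argument supplies the vanishing ingredient, but without first establishing the existence of the one-sided limits and excluding a jump (equivalently, a Dirac contribution) at $x_0$, your proof of $\mathcal Z(0,1)\subseteq X$ is incomplete at its central step.
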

\begin{proof}
	Obviously, $X\subseteq \mathcal Z (0,1)$.
	Now, we prove $\mathcal Z (0,1)\subset X$. To this aim, let $u\in \mathcal Z(0,1)$; we only need to prove that 
	\begin{equation*}
		au\in H^1(0,1),
		\quad
		au'\in H^1(0,1)
	\end{equation*}
	and $(au^{(k)})(x_0)=0,$ for all $k=0,1,2$.
	
	It is easy to prove that $(au)(x_0)=(au'')(x_0)=0.$  Indeed, since $u\in H^1(0,1)$, we have that $u \in \mathcal C[0,1]$, thus $au \in  \mathcal C[0,1]$ and there exists 
	\[
	\lim_{x \rightarrow x_0} (au)(x)=(au)(x_0)=0.
	\]

Now, we prove	 that $(au'')(x_0)=0$. By assumption, we know that $au''\in H^2(0,1)$, hence $au''$  is continuous in $[0,1]$. In particular, there exists $\lim_{x\rightarrow x_0} (au'')(x)= (au'')(x_0)=:N \in \R$. If $N\neq 0$, there exists $C>0$ such that
$
	|(au'')^2(x)|\ge C
$
	for all $x$ in a  neighbourhood of $x_0$, $x \neq x_0$. In particular,
	\[
	|(a(u'')^2)(x)|\ge \frac{C}{a(x)}
	\]
	for all $x$ in a  neighbourhood of $x_0$, $x \neq x_0$. Using the assumption on $a$ one has that $\sqrt{a} u'' \not\in L^2(0,1)$; hence $N=0$.

	Now, we prove that $au \in H^1(0,1)$.
	We already know that $au\in L^2(0,1)$. Furthermore, since $a\in W^{1,\infty}(0,1)$,
	$
		(au)'=a'u+au'\in L^2(0,1).
	$
Moreover,  we easily get that $(au)'$ is the distributional derivative of $au$, and so $au \in H^1(0,1)$. Indeed, assuming for simplicity $x_0 \in (0,1)$, taking $\varphi \in \mathcal C_c^\infty(0,1)$ and $h>0$, one has
	\[
	\begin{aligned}
		\int_0^1(au)'(x)\varphi(x)dx&= \int_0^{x_0-h}(au)'(x)\varphi(x)dx + \int_{x_0-h}^{x_0+h}(au)'(x)\varphi(x)dx\\
		&+ \int_{x_0+h}^1(au)'(x)\varphi(x)dx.
	\end{aligned}\] 
	Obviously, by the absolute continuity of the Lebesgue integral, it follows that 
	\[
	\lim_{h \rightarrow 0}\int_{x_0-h}^{x_0+h}(au)'(x)\varphi(x)dx=0;
	\]
	moreover
	\[
	\begin{aligned}
		&\lim_{h \rightarrow 0}\int_0^{x_0-h}(au)'(x)\varphi(x)dx =\lim_{h \rightarrow 0}\left(( au\varphi)(x_0-h) - \int_0^{x_0-h}(au)(x)\varphi '(x)dx\right) \\
		&= - \int_0^{x_0}(au)(x)\varphi '(x)dx 
	\end{aligned}
	\]
	and 
	\[
	\begin{aligned}
		&
		\lim_{h \rightarrow 0}\int_{x_0+h}^1(au)'(x)\varphi(x)dx = \lim_{h \rightarrow 0}\left(- (au\varphi)(x_0+h) - \int_{x_0+h}^1(au)(x)\varphi '(x)dx\right) 
		\\
		&=  - \int_{x_0}^1(au)(x)\varphi '(x)dx.
	\end{aligned}
	\]
	Hence, for all  $\varphi \in \mathcal C_c^\infty(0,1)$ it holds
	\[
	\int_0^1(au)'(x)\varphi(x)dx = -\int_0^1(au)(x)\varphi '(x)dx.
	\]
Nothing change if $x_0=0$ or $x_0=1$;	thus $(au)'$ is the distributional derivative of $au$ and so $au \in H^1(0,1)$. 

Now, we prove that $\exists \; \lim_{x \rightarrow 0} (au')(x)=(au')(x_0)=0$.
	As a first step, consider $x_0 \in (0,1)$ and take $x<x_0$. Since $u'$ is locally absolutely continuous in $[0,1] \setminus \{x_0\}$, we have that $u'(0)$ and $u'(1)$ are well defined. Moreover,  $au' \in L^2(0,1)$ and
	\[
	(au')' =a'u' + au'' \in L^2(0,1),
	\]
	since $u \in \mathcal Z(0,1)$.
	 Thus
	\begin{equation*}
		(au')(x)=\int_0^x (au')'(t)dt+(au')(0)
	\end{equation*}
	and there exists $\lim_{x \to x_0^-}(au')(x)=(au')(x_0^-)=\int_{0}^{x_0}(au')'(t)dt+(au')(0) =:L \in \R$. Analogously, for $x>x_0$: starting from
	\begin{equation*}
		(au')(x)=-\int_x^1 (au')'(t)dt+(au')(1),
	\end{equation*}
	one can prove that $\exists\;\lim_{x \to x_0^+}(au')(x)=(au')(x_0^+)=-\int_{x_0}^{1}(au')'(t)dt+(au')(1)=:M \in \R$. Proceeding as before, we can prove that $L=M=0$. 
	Indeed, if we assume $L, M\neq 0$, then there exists $C>0$ such that
	\begin{equation*}
		|(u')^2(x)|\ge  \frac{C}{a(x)},
	\end{equation*}
	for all $x$ in a  left and in a right  neighbourhood of $x_0$. Using the assumption on $a$, one has that $u' \notin L^2(0,1)$; hence $L=M=0$. 
	Thus, there exists $\lim_{x \rightarrow x_0 } (au')(x)= (au')(x_0)=0$.
	
	If $x_0=0$ or $x_0=1$, one can proceed as in the case $x_0 \in (0,1)$, taking $x>0$ or $x<1$, respectively. 
	
	Proceeding as before, one can prove that  $(au')'$ is the distributional derivative of $au'$  so that $au' \in H^1(0,1)$ and the thesis follows.
\end{proof}
Clearly, the following characterizations hold:
\[
D(A_1):=\{u\in X:\, u''(0)=u''(1)=u'''(0)=u'''(1)=0
\}, \quad \text{if} \;\; x_0 \in (0,1),
\]
\[
	\begin{split}
		D(A_1):=\{u\in X:\; u''(1)=u'''(1)=0, (au'')(0)=(au'')'(0)=0\},\quad \text{if} \;\; x_0 =0,
	\end{split}
\]
\[
	\begin{split}
		D(A_1):=\{u\in X: u''(0)=u'''(0)=0, (au'')(1)=(au'')'(1)=0\},\quad \text{if} \;\; x_0 =1.
	\end{split}
\]

Using Proposition \ref{Proposition 2.1} and Lemma \ref{lemma2.2}, we can prove the formula of integration by parts (\ref{gf0}) also in the strongly degenerate case.
\begin{Lemma}\label{gf0 strong case}
	For all $(u,v)\in {\mathcal Z}(0,1) \times H^2_a(0,1)$ the Gauss-Green formula \eqref{gf0} still holds. 
\end{Lemma}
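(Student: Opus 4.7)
The strategy is to truncate the integration away from $x_0$, apply the standard Gauss-Green formula on each of the two pieces (valid there because $a$ is bounded below), and then let the truncation shrink to zero. Concretely, assume $x_0\in(0,1)$ (the endpoint cases are simpler as only one side contributes); for $\delta>0$ small split
\[
\int_0^1(au'')''v\,dx=\int_0^{x_0-\delta}(au'')''v\,dx+\int_{x_0-\delta}^{x_0+\delta}(au'')''v\,dx+\int_{x_0+\delta}^1(au'')''v\,dx.
\]
By Proposition \ref{Proposition 2.1}, $u\in X$, so $au''\in H^2(0,1)$; moreover on each of $[0,x_0-\delta]$ and $[x_0+\delta,1]$ the coefficient $a$ is bounded away from zero, which forces $v'\in H^1$ on these subintervals (since $\sqrt{a}v''\in L^2$). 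Classical integration by parts therefore yields
\[
\int_0^{x_0-\delta}(au'')''v\,dx=\bigl[(au'')'v-(au'')v'\bigr]_0^{x_0-\delta}+\int_0^{x_0-\delta}au''v''\,dx,
\]
together with the analogous identity on $[x_0+\delta,1]$.

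Letting $\delta\to 0^+$, the middle slice $\int_{x_0-\delta}^{x_0+\delta}(au'')''v\,dx$ vanishes by absolute continuity of the integral, the two bulk terms converge to $\int_0^1 au''v''\,dx$ (the integrand being in $L^1$ since $\sqrt{a}u'',\sqrt{a}v''\in L^2$), and the contributions at $x=0,1$ combine into $[(au'')'v-(au'')v']_0^1$. What remains are the inner boundary terms at $x_0\pm\delta$. By continuity of $(au'')'$ and $v$ on $[0,1]$, the difference $(au'')'(x_0-\delta)v(x_0-\delta)-(au'')'(x_0+\delta)v(x_0+\delta)$ tends to $0$. The delicate piece is
\[
h(\delta):=(au'')(x_0+\delta)v'(x_0+\delta)-(au'')(x_0-\delta)v'(x_0-\delta),
\]
which cannot be handled term by term because $v'$ need not be bounded, nor need it have one-sided limits, at $x_0$ in the strongly degenerate setting.

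To dispose of $h(\delta)$ I would argue in two steps. First, isolating $h(\delta)$ in the summed IBP identity expresses it as a combination of quantities each admitting a finite limit as $\delta\to 0^+$, so $J:=\lim_{\delta\to 0^+}h(\delta)$ exists in $\R$. Second, a change of variable gives
\[
\int_0^\eta h(\delta)\,d\delta=\int_{x_0}^{x_0+\eta}au''v'\,ds-\int_{x_0-\eta}^{x_0}au''v'\,ds,
\]
and by Cauchy-Schwarz together with Lemma \ref{lemma2.2}(3), which yields $\|au''\|_{L^2(x_0-\eta,x_0+\eta)}\le C\eta$,
\[
\Bigl|\int_0^\eta h(\delta)\,d\delta\Bigr|\le 2C\eta\,\|v'\|_{L^2(x_0-\eta,x_0+\eta)}=o(\eta),
\]
since $v'\in L^2(0,1)$. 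As $h(\delta)\to J$, the mean $\frac{1}{\eta}\int_0^\eta h(\delta)\,d\delta$ also tends to $J$, forcing $J=0$; the Gauss-Green formula \eqref{gf0} follows. The main obstacle is exactly the uncontrolled pointwise behavior of $v'$ near the degeneracy, and the trick is to recover the required cancellation in an averaged sense using the $\sqrt{|x-x_0|}$-decay of $au''$ supplied by Lemma \ref{lemma2.2}.
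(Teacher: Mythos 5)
Your argument is correct, and it shares the paper's skeleton (truncation at $x_0\pm\delta$, passage to the limit, and Lemma \ref{lemma2.2}(3) as the key decay input via Proposition \ref{Proposition 2.1}), but it handles the crucial boundary term at the degeneracy differently. The paper first integrates by parts once on all of $(0,1)$ (using $v\in H^1(0,1)$), splits only the integral $\int_0^1(au'')'v'\,dx$, shows that each one-sided limit $L_{1},L_{2}$ of $(au''v')(x_0\mp\delta)$ exists, and then kills each of them by a pointwise contradiction: if $L_i\neq 0$ then $|v'(x)|\ge C_1|x-x_0|^{-1/2}$ near $x_0$ by Lemma \ref{lemma2.2}(3), contradicting $v'\in L^2(0,1)$. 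You instead split the original integral, integrate by parts twice on each outer piece (legitimate since $a$ is bounded below there, so $v\in H^2$ on those subintervals, using also the local absolute continuity of $v'$ built into the definition of $H^2_a$), reduce everything to the single difference $h(\delta)=(au''v')(x_0+\delta)-(au''v')(x_0-\delta)$, obtain existence of $J=\lim h(\delta)$ from the identity itself, and then force $J=0$ by a Ces\`aro/averaging argument: $\bigl|\int_0^\eta h\,d\delta\bigr|\le \|au''\|_{L^2(x_0-\eta,x_0+\eta)}\|v'\|_{L^2(x_0-\eta,x_0+\eta)}\le C\eta\,\|v'\|_{L^2(x_0-\eta,x_0+\eta)}=o(\eta)$, where the $O(\eta)$ bound on $\|au''\|_{L^2(x_0-\eta,x_0+\eta)}$ again comes from Lemma \ref{lemma2.2}(3). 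What the paper's route buys is slightly more information (both one-sided traces of $au''v'$ at $x_0$ vanish, not just their difference, which is what underlies the characterization of the domain); what your route buys is that it avoids any pointwise lower-bound/contradiction step and only uses $v'\in L^2$ together with an integrated form of the decay of $au''$, which makes the cancellation argument somewhat more robust. Either way the conclusion \eqref{gf0} follows, and your treatment of the remaining terms (absolute continuity of the Lebesgue integral for the middle slice and the bulk integrals, continuity of $(au'')'$ and $v$ for the inner $(au'')'v$ terms, and the endpoint cases $x_0\in\{0,1\}$ by one-sided truncation) matches the paper's.
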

\begin{proof}
	Take $u \in \mathcal Z(0,1)$ and $v \in H^2_a(0,1)$. By definitions of $\mathcal Z(0,1)$ and $H^2_a(0,1)$, one has that $au''\in H^2(0,1)$ and $v \in H^1(0,1)$. Hence, one can integrate by parts, obtaining
	\begin{equation}\label{sg1}
		\int_{0}^{1}(au'')''v\,dx=[(au'')'v]^{x=1}_{x=0}-\int_{0}^{1}(au'')'v'dx.
	\end{equation}
	Now, we consider the term $\int_{0}^{1}(au'')'v'dx$ and take $x_0 \in (0,1)$.  For $\delta >0$, we have
	\begin{equation}\label{integrationS}
		\begin{aligned} \int_0^1 (au'')' v' dx &= (au''v')(x_0- \delta) - (au''v')(0)
			-\int_0^{x_0-\delta}au''v''dx\\& + \int_{x_0-
				\delta}^{x_0+\delta}(au'')'v' dx + (au''v')(1) - (au''v')(x_0+ \delta) \\
				&- \int_{x_0+
				\delta}^1 au''v'' dx.
		\end{aligned}
	\end{equation}
	Moreover, by the  H\"older inequality
	$a u''v''$ and $(au'')' v'$ belong to $L^1(0, 1)$ (observe that $au''v''=\sqrt{a}u'' \sqrt{a}v'' \in L^1(0,1)$). Hence, by the absolute continuity of the Lebesgue integral, one has
	\begin{equation}\label{delta1S}
		\lim_{\delta \rightarrow 0} \int_0^{x_0- \delta}au''v''  dx =
		\int_0^{x_0}au''v''  dx, \quad \lim_{\delta \rightarrow 0} \int_{x_0+
			\delta}^1au''v''  dx = \int_{x_0}^1au''v'' dx
	\end{equation}
	and
	\begin{equation}\label{delta2S}
		\lim_{\delta \rightarrow 0} \int_{x_0- \delta}^{x_0+ \delta}(au'')'v'
		dx =0.
	\end{equation}
	In order to obtain the
	desired result it is sufficient to prove that
	\begin{equation}\label{delta3S}
		\lim_{\delta \rightarrow 0} (au''v')(x_0- \delta) =\lim_{\delta
			\rightarrow 0} (au''v')(x_0+ \delta).
	\end{equation}
	
	To this aim, we can choose $\delta$ in \eqref{integrationS} such that $v'$ is absolutely continuous in $\mathcal K:=[0, x_0-\delta]\cup [x_0+ \delta, 1]$. Moreover, using the fact that also $au''\in \mathcal C[0,1]$, one has 
	\[
	\begin{aligned}
		(au''v')(x_0-\delta)&= \int_0^{x_0-\delta}(au''v')'(s)ds +(au''v')(0)\\
		&=\int_0^{x_0-\delta}[(au'')'v'](s)ds +\int_0^{x_0-\delta}[au''v''](s)ds +(au''v')(0).
	\end{aligned}
	\]
	Proceeding as before, one has that there exists 
	$L_1\in \R$ such that
	\[
	\begin{aligned}
		\lim_{\delta \rightarrow 0}(au''v')(x_0-\delta)
		&=\lim_{\delta \rightarrow 0}\left( \int_0^{x_0-\delta}[(au'')'v'](s)ds +\int_0^{x_0-\delta}[au''v''](s)ds \right)\\
		&+(au''v')(0)\\
		& =\int_0^{x_0}[(au'')'v'](s)ds + \int_0^{x_0}(au''v'')(s)ds +(au''v')(0) =:L_1.
	\end{aligned}
	\]
	Analogously, one has
	\[
	\begin{aligned}
		\lim_{\delta \rightarrow 0}(au''v')(x_0+\delta)&=(au''v')(1)-\int_{x_0}^1[(au'')'v'](s)ds-\int_{x_0}^1(au''v'')(s)ds =:L_2.
	\end{aligned}
	\]
	If $L_1 \neq 0$, then there exists $C>0$ such that
$
	|(au''v')(x)| \ge C
$
	for all $x$ in a left  neighbourhood of $x_0$, $x \neq x_0$. Thus, by
	Lemma \ref{lemma2.2},
	\[
	|v'(x)| \ge \frac{C}{|(au'')(x)|}\ge \frac{C_1}{\sqrt{x_0-x}}
	\]
	for all $x$ in a left  neighbourhood of $x_0$, $x \neq x_0$, and for a
	suitable positive constant $C_1$. This implies that $v' \not \in
	L^2(0,1)$. Hence $L_1=0$. Analogously, one can prove that $L_2=0$.
	Thus \eqref{delta3S} holds and 
	the thesis follows by \eqref{sg1}-\eqref{delta3S}.
	
	The case $x_0=0$ or $x_0=1$ can be proved in a similar way.
\end{proof}
Also in the strongly degenerate case, to prove Theorem \ref{teorema generazione (div)}    it is important the equivalence between the two norms $\|\cdot\|_{H^2_a(0,1)}$ and $|||\cdot|||$ (defined in \eqref{norma tre stanghette}). This is done in \cite[Proposition 2.2]{CF-articolo Wentzell} under the additional Hypothesis:  
\begin{Assumptions}\label{Ipoaggiuntiva articolo wentzell}
	Assume that there exists $K \in [1,2)$ such that the function 
	$
	x \mapsto \frac{|x-x_0|^K}{a}
	$
	is
	\begin{enumerate}
		\item non increasing on the left of $x_0$ and non decreasing on the right of $x_0$, if $x_0 \in (0,1),$
		\item non decreasing on the right of $0$, if $x_0=0,$
		\item non increasing on the left of $1$, if $x_0=1$.
	\end{enumerate}
\end{Assumptions}

Thus, the analogues of Theorems \ref{teorema generazione (div)} and \ref{teo exist and regul caso div} hold under Hypothesis \ref{Ipoaggiuntiva articolo wentzell} if $a$ is strongly degenerate.

\section{The operator in non divergence form}\label{Sec3}
In this section we consider the operator $A_2u:= au_{xxxx}$
with suitable boundary conditions and we prove its generation property.

In this case the spaces that we consider are the same in the weakly or in the strongly degenerate case. In particular, we take into account the following spaces introduced in \cite{CaFrRo}, if the degeneracy occurs on the boundary of the space domain, or in \cite{6} (see also \cite{memoirs}), if the degeneracy point is in the interior:
\begin{equation*} 
	L^2_{\frac{1}{a}}(0, 1):=\biggl \{u\in L^2(0, 1):\int_{0}^{1}\frac{u^2}{a}\,dx<+\infty \biggr \}
\end{equation*}
and
\begin{equation*}
	H^i_{\frac{1}{a}}(0, 1):= L^2_{\frac{1}{a}}(0, 1)\cap H^i(0, 1),
\end{equation*}
with the respective norms defined by
\begin{equation*}
	\norm{u}^2_{L^2_{\frac{1}{a}}(0, 1)}:= \int_{0}^{1}\frac{u^2}{a}\,dx, \quad\,\,\,\,\,\,\forall \;u \in L^2_{\frac{1}{a}}(0, 1),
\end{equation*}
and
\begin{equation*}
	\norm{u}_{H^i_{\frac{1}{a}}(0, 1)}^2:=\norm{u}^2_{L^2_{\frac{1}{a}}(0, 1)} + \sum_{j=1}^{i}\| u^{(j)}\|^2_{L^2(0, 1)}, \quad\,\,\,\, \forall \;u \in H^i_{\frac{1}{a}}(0, 1),
\end{equation*}
$i=1,2$.
Observe that for all $u \in H^i_{\frac{1}{a}}(0, 1)$, one can prove that $\|u\|^2_{H^i_{\frac{1}{a}}(0,1)}$ is equivalent to the following one
\[
\|u\|_i^2:= \|u\|^2_{L^2_{\frac{1}{a}}(0,1)} +  \|u^{(i)}\|^2_{L^2(0,1)}, \quad \forall \; u \in H^i_{\frac{1}{a}}(0, 1)
\]
(see, e.g., \cite[Chapter VIII]{brezis} or \cite{CF-articolo Wentzell}).  We underline also that if $u \in H^2_{\frac{1}{a}}(0, 1)$, then $(au)(x_0)=(au')(x_0)=0$ since $a(x_0)=0$.

Also in this section we distinguish between the weakly degenerate case and the strongly degenerate one.

\subsection{Weakly degenerate case}
Assume that the function $a$ is weakly degenerate at a point $x_0 \in [0,1]$ and define the space
\begin{equation}\label{spazioW}
	\mathcal{W}(0,1):=\Bigl \{u\in H^2_{\frac{1}{a}}(0, 1):au''''\in L^2_{\frac{1}{a}}(0, 1)\Bigr \}.
\end{equation}
Clearly, if $u \in \mathcal W(0,1)$, then $u'''' \in L^1(0,1)$; indeed
\[
\int_0^1 u''''(x)dx = \int_0^1 \frac{\sqrt{a}u''''(x)}{\sqrt{a}}dx \le \left\|\frac{1}{a}\right\|_{L^1(0,1)} \|\sqrt{a}u''''\|_{L^2(0,1)}.
\] Thus, we can apply \cite[Lemma 2.1]{CF} obtaining that $u \in W^{4,1}(0,1)$  and, as a consequence, $u \in \mathcal C^3[0,1]$.

In particular,  being $a(x_0)=0$, $(au'')(x_0)=(au''')(x_0)=0$. 

Moreover, as in \cite[Proposition 3.1]{CF} for the Dirichlet case, one can prove 
that the spaces $H^i_{\frac{1}{a}}(0, 1)$ 
and $H^i(0, 1)$, $i=1,2$, coincide algebraically and the two norms are equivalent. 

Hence, as in Lemma \ref{lemma gauss-green}, one has immediately the following integration by parts.
\begin{Lemma}\label{lemma problema di prova}
	For all $(u,v)\in \mathcal{W}(0,1)\times H^2_{\frac{1}{a}}(0, 1)$ one has
	\begin{equation}\label{GF1}
		\int_{0}^{1}u''''v\,dx=[u'''v]^{x=1}_{x=0}-[u''v']^{x=1}_{x=0}+\int_{0}^{1}u''v''dx.
	\end{equation}
\end{Lemma}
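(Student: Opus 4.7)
My plan is to reduce the identity to standard one-variable integration by parts, exploiting the regularity of $u$ and $v$ that has already been established just above the statement.

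First, I would record the basic regularity. By the remark immediately preceding the lemma, $u \in \mathcal{W}(0,1)$ forces $u \in W^{4,1}(0,1)$ and hence $u \in \mathcal{C}^3[0,1]$; in particular $u''$ and $u'''$ are continuous on $[0,1]$ and $u''''\in L^1(0,1)$. For the test function, I would invoke the equivalence $H^2_{1/a}(0,1)=H^2(0,1)$ (with equivalent norms) stated just above, which gives that $v, v' \in \mathcal{C}[0,1]$ and in fact both are absolutely continuous on $[0,1]$.

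Next I would verify integrability of the terms appearing in \eqref{GF1}. The product $u'''' v$ lies in $L^1(0,1)$ since $v$ is bounded and $u''''\in L^1(0,1)$; equivalently, using $au''''\in L^2_{1/a}(0,1)$ and Cauchy--Schwarz one writes
\[
\int_0^1 |u''''v|\,dx = \int_0^1 \frac{|au''''|}{\sqrt{a}}\cdot\frac{|v|}{\sqrt{a}} \cdot \sqrt{a}\, dx \le \|au''''\|_{L^2_{1/a}}\|v\|_{L^2_{1/a}}\|\sqrt{a}\|_\infty,
\]
which is finite since $a$ is bounded. Similarly $u''v''\in L^1(0,1)$ since $u''\in \mathcal{C}[0,1]$ and $v''\in L^2(0,1)\subset L^1(0,1)$, and $u'''v', u''v'$ are products of a continuous function and an $L^1$ (indeed $L^2$) function.

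Finally, I would just integrate by parts twice. Since $u'''\in \mathcal{C}^1[0,1]$ (its derivative $u''''$ exists a.e.\ and is integrable, and $u'''$ is absolutely continuous because $u\in W^{4,1}$) and $v$ is absolutely continuous,
\[
\int_0^1 u'''' v\, dx = [u''' v]_0^1 - \int_0^1 u''' v'\, dx.
\]
Likewise $u''$ is absolutely continuous on $[0,1]$ with derivative $u'''$, and $v'$ is absolutely continuous, so a second integration by parts gives
\[
\int_0^1 u''' v'\, dx = [u'' v']_0^1 - \int_0^1 u'' v''\, dx.
\]
Combining these two identities yields \eqref{GF1}. The argument involves no boundary subtlety at $x_0$ because the weak degeneracy together with $au''''\in L^2_{1/a}$ already forced $u\in \mathcal{C}^3[0,1]$, so the main (minor) obstacle is just checking that each product is integrable and that the derivatives used are genuinely the distributional ones on all of $[0,1]$; both follow from the $\mathcal{C}^3$ and $H^2$ regularity noted above.
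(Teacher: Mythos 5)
Your proof is correct and follows essentially the same route the paper intends: it deduces the identity ``immediately'' from the regularity established just before the statement, namely $u\in W^{4,1}(0,1)\subset \mathcal C^3[0,1]$ and $H^2_{\frac{1}{a}}(0,1)=H^2(0,1)$, and then integrates by parts twice exactly as in Lemma \ref{lemma gauss-green}. Two cosmetic corrections: in your Cauchy--Schwarz display the extra factor $\sqrt{a}$ makes the written equality false (drop it, since $|u''''v|=\frac{|au''''|}{\sqrt{a}}\cdot \frac{|v|}{\sqrt{a}}$ already, or simply keep your first justification that $v$ is bounded and $u''''\in L^1(0,1)$), and $u'''$ is only absolutely continuous rather than $\mathcal C^1[0,1]$, which is in any case all the integration by parts requires.
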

Using the previous spaces, we define the operator $A_2$ by
\[
A_2u:=au''''
\] 
for all $u \in
D(A_2)$, where
\begin{equation*}
	\begin{aligned}
		D(A_2):&=\{u\in \mathcal{W}(0,1):\, u''(0)=u''(1)= u'''(0)=u'''(1)=0 \}\\
		&=\left\{ u\in H^2_{\frac{1}{a}}(0,1): u''(0)=u''(1)= u'''(0)=u'''(1)=0\right. \\
		 &\left. \qquad \qquad \qquad \quad
		 \text{ and }au''''\in L^2_{\frac{1}{a}}(0, 1) \right\}.
	\end{aligned}
\end{equation*}
In this case the definition of $D(A_2)$ and, in particular, the natural Neumann boundary conditions
\begin{equation*}
	u''(0)=u''(1)=u'''(0)=u'''(1)=0
\end{equation*}
are independent of the degeneracy point $x_0\in [0,1]$ (recall that $u \in \mathcal C^3[0,1]$, thus all the previous terms are well defined).

\begin{Remark}
	As observed in Remark \ref{remark 1}, we underline that, if $u\in D(A_2)$ and $v\in H^2_{\frac{1}{a}}(0, 1)$, then the boundary terms in \eqref{GF1}, i.e., $[u'''v]^{x=1}_{x=0}$ and $[u''v']^{x=1}_{x=0}$, are equal to $0$ and (\ref{GF1}) becomes
	\begin{equation}\label{rem2}
		\int_{0}^{1}u''''v\,dx=\int_{0}^{1}u''v''dx.
	\end{equation}
\end{Remark}
As in the previous section, this formula will represent the key ingredient in the generation property established in the next theorem.
\begin{Theorem}\label{Theorem 3.1}
	The operator $(A_2,D(A_2))$ is self-adjoint and non negative on $L^2_{\frac{1}{a}}(0, 1)$ with dense domain. Hence $-A_2$ generates a contractive analytic semigroup of angle $\displaystyle\frac{\pi}{2}$ on $L^2_{\frac{1}{a}}(0, 1)$. 
\end{Theorem}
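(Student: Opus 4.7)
The plan is to parallel the proof of Theorem~\ref{teorema generazione (div)}: I will show that $A_2$ is symmetric and non negative on $L^2_{\frac{1}{a}}(0,1)$, and that $I+A_2\colon D(A_2)\to L^2_{\frac{1}{a}}(0,1)$ is surjective, so that $A_2$ is self-adjoint and maximal monotone; the generation of a contractive analytic semigroup of angle $\frac{\pi}{2}$ will then follow from \cite[Corollary 3.20]{en} together with the Stone--von Neumann spectral theorem. The decisive structural feature is that the weight $\frac{1}{a}$ defining the ambient inner product cancels against the coefficient $a$ in $A_2u=au''''$, turning the weighted integral into an unweighted one for which (\ref{rem2}) is directly applicable.

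For symmetry and non negativity, take $u,v\in D(A_2)$ and compute
\[
\langle A_2u,v\rangle_{L^2_{\frac{1}{a}}}=\int_0^1\frac{au''''v}{a}\,dx=\int_0^1 u''''v\,dx=\int_0^1 u''v''\,dx
\]
by (\ref{rem2}). Exchanging $u$ and $v$ gives the symmetry, while choosing $v=u$ yields $\langle A_2u,u\rangle_{L^2_{\frac{1}{a}}}=\int_0^1(u'')^2\,dx\ge 0$.

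For the surjectivity of $I+A_2$, given $f\in L^2_{\frac{1}{a}}(0,1)$, I would invoke the Lax--Milgram theorem on the Hilbert space $H^2_{\frac{1}{a}}(0,1)$ equipped with the (equivalent) inner product $\langle u,v\rangle:=\int_0^1\frac{uv}{a}\,dx+\int_0^1 u''v''\,dx$, producing a unique $u\in H^2_{\frac{1}{a}}(0,1)$ such that
\[
\int_0^1\frac{uv}{a}\,dx+\int_0^1 u''v''\,dx=\int_0^1\frac{fv}{a}\,dx,\qquad\forall\, v\in H^2_{\frac{1}{a}}(0,1).
\]
Testing against $v\in\cC^\infty_c(0,1)$ shows that the distributional fourth derivative of $u$ equals $(f-u)/a$; by Cauchy--Schwarz and $\frac{1}{a}\in L^1(0,1)$ this quotient lies in $L^1(0,1)$, so \cite[Lemma 2.1]{CF} gives $u\in W^{4,1}(0,1)\subset\cC^3[0,1]$. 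From $au''''=f-u$ one then reads $\int_0^1(au'''')^2/a\,dx=\|f-u\|_{L^2_{\frac{1}{a}}}^2<\infty$, so $au''''\in L^2_{\frac{1}{a}}(0,1)$ and thus $u\in\cW(0,1)$ with $(I+A_2)u=f$.

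The step I expect to be the main technical point is recovering the natural Neumann boundary conditions $u''(0)=u''(1)=u'''(0)=u'''(1)=0$, which were never imposed in the variational formulation. Applying (\ref{GF1}) to $\int_0^1 u''v''\,dx$ and subtracting the identity $\int_0^1 au''''v/a\,dx=\int_0^1(f-u)v/a\,dx$ from the variational equation yields
\[
[u'''v]_0^1-[u''v']_0^1=0,\qquad\forall\, v\in H^2_{\frac{1}{a}}(0,1);
\]
since $H^2_{\frac{1}{a}}(0,1)=H^2(0,1)$ algebraically, the boundary values of $v$ and $v'$ at $0$ and $1$ can be prescribed independently, forcing each of the four boundary quantities of $u$ to vanish. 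Hence $u\in D(A_2)$, which completes the proof of surjectivity of $I+A_2$ and, together with the preceding paragraphs, of the theorem.
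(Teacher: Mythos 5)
Your proposal is correct and follows essentially the same route as the paper's proof: symmetry and non negativity via \eqref{rem2}, surjectivity of $I+A_2$ via Lax--Milgram on $H^2_{\frac{1}{a}}(0,1)$ with the same inner product, recovery of $au''''\in L^2_{\frac{1}{a}}(0,1)$ and of the Neumann conditions through the Gauss--Green formula \eqref{GF1}, and generation via maximal monotonicity together with the Stone--von Neumann theorem. The only difference is that you spell out details the paper leaves implicit (the $L^1$ bound on $(f-u)/a$ giving $u\in W^{4,1}(0,1)$, and the independence of the boundary values of $v,v'$ using $H^2_{\frac{1}{a}}(0,1)=H^2(0,1)$), which is consistent with the paper's citations of \cite{CF}.
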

\begin{proof}
	The proof is similar to the one of Theorem \ref{teorema generazione (div)}, so we sketch it.
	
	\underline{$A_2$ is symmetric}: using \eqref{rem2}, for any $u,v\in D(A_2)$, one has
	\begin{equation*}
		\left\langle v, A_2u \right\rangle_{L^2_{\frac{1}{a}}(0, 1)}=\int_{0}^{1}\frac{v a u''''}{a}\,dx=\int_{0}^{1}v''''u\,dx=\left\langle A_2v, u \right\rangle_{L^2_{\frac{1}{a}}(0, 1)}.
	\end{equation*}
	\indent
	\underline{$A_2$ is non negative}: for any $u\in D(A_2)$, by \eqref{rem2},
	\begin{equation*}
		\left\langle A_2u, u \right\rangle_{L^2_{\frac{1}{a}}(0, 1)}=\int_{0}^{1}\frac{a u'''' u}{a}\,dx=\int_{0}^{1}(u'')^2dx\ge 0.
	\end{equation*}
	\indent
	\underline{$I+A_2$ is surjective}:  observe that $H^2_{\frac{1}{a}}(0, 1)$ equipped with the inner product
	\begin{equation*}
		\left\langle u, v \right\rangle_{H^2_{\frac{1}{a}}(0, 1)} := \int_{0}^{1} \biggl (\frac{u v}{a}+u''v''\biggr ) dx\quad \,\,\,\,\,\,\,\forall\; u, v \in H^2_{\frac{1}{a}}(0, 1),
	\end{equation*} 
	is a Hilbert space. Moreover
	\begin{equation*}
		H^2_{\frac{1}{a}}(0, 1) \hookrightarrow L^2_{\frac{1}{a}}(0, 1) \hookrightarrow \left(H^2_{\frac{1}{a}}(0, 1)\right)^*,
	\end{equation*}
	where $\left(H^2_{\frac{1}{a}}(0, 1)\right)^*$ is the dual space of $H^2_{\frac{1}{a}}(0, 1)$ with respect to $L^2_{\frac{1}{a}}(0, 1)$. 
	Now, for $f \in L^2_{\frac{1}{a}}(0, 1)$, define the functional $F\in \left(H^2_{\frac{1}{a}}(0, 1)\right)^*$ given by 
	\begin{equation*}
		F(v):= \int_{0}^{1}\frac{f v}{a}\,dx\quad \,\,\,\,\,\,\,\forall \;v\in H^2_{\frac{1}{a}}(0, 1).
	\end{equation*}
	Consequently, by the Lax-Milgram Theorem, there exists a unique $u\in H^2_{\frac{1}{a}}(0, 1)$ such that
	\begin{equation}\label{Consequence Riesz}
		\left\langle u, v \right\rangle_{H^2_{\frac{1}{a}}(0, 1)}=\int_{0}^{1} \frac{f v}{a}\,dx \Longleftrightarrow \int_{0}^{1} \biggl (\frac{u v}{a}+u''v''\biggr ) dx=\int_{0}^{1} \frac{f v}{a}\,dx,
	\end{equation}
for all $v\in H^2_{\frac{1}{a}}(0, 1)$.	In particular, since $\mathcal{C}^{\infty}_c(0, 1) \subset H^2_{\frac{1}{a}}(0, 1)$, (\ref{Consequence Riesz}) becomes
	\begin{equation*}
		\int_{0}^{1} u''v''dx = \int_{0}^{1} \frac{f-u}{a}\,v\,dx\quad \,\,\,\,\,\,\,\forall	\;v\;\in \mathcal{C}^{\infty}_c(0, 1).
	\end{equation*}
	Thus, the distributional second derivative of $u''$ is equal to $\ds\frac{f-u}{a}$ a.e. in $(0,1)$ and, proceeding as in \cite{CF}, one has 
	$au''''\in L^2_{\frac{1}{a}}(0, 1)$; hence $u \in \mathcal W(0,1)$. Coming back to (\ref{Consequence Riesz}) and using the Gauss-Green Identity, it results that
	\begin{equation*}
		[u'''v]^{x=1}_{x=0}=[u''v']^{x=1}_{x=0}=0\quad\,\,\,\,\,\,\,\,\,\,\forall\; v \in H^2_{\frac{1}{a}}(0, 1)
	\end{equation*}
	and one can conclude that $u''(0)=u''(1)=u'''(0)=u'''(1)=0$.
	
	As a consequence, $u \in D(A_2)$ and
	$
		au''''=f -u\quad \text{a.e. in } (0,1).
$
	Hence $(I+A_2)(u)=f$. The rest of the proof is as in Theorem \ref{teorema generazione (div)}.
\end{proof}

Hence, one has the following well posedness result, for which we premise the following definition:
\begin{Definition}
	If $u_0 \in L^2_{\frac{1}{a}}(0, 1)$ and $h\in L^2(0,T;L^2_{\frac{1}{a}}(0, 1))$, a function $u$ is said to be a weak solution of 
	\begin{equation}\label{problema2}
	\begin{cases}
		u_t(t,x)+au_{xxxx}(t,x)=h(t,x), &(t,x)\in (0,T) \times (0,1),\\
		u_{xx}(t,0)=u_{xx}(t,1)=0,&t\in (0,T),\\
		u_{xxx}(t,0)=u_{xxx}(t,1)=0,&t\in (0,T),\\
		u(0,x)=u_0(x),&x\in(0,1),
	\end{cases}
\end{equation}
	if
	\begin{equation*}
		u\in \mathcal{C}\Bigl([0,T];L^2_{\frac{1}{a}}(0, 1)\Bigr )\cap L^2\Bigl (0,T;H^2_{\frac{1}{a}}(0, 1) \Bigr )
	\end{equation*}
	and
	\begin{equation*}
		\begin{split}
			&	\int_0^1\frac{u(T,x)\varphi(T,x)}{a(x)}\,dx-\int_0^1\frac{u_0(x)\varphi(0,x)}{a(x)}\,dx-\int_0^T\int_0^1\frac{u(t,x)\varphi_t(t,x)}{a(x)}\,dx\,dt =\\
			&- \int_0^T\int_0^1u_{xx}(t,x)\varphi_{xx}(t,x)\,dx\,dt+\int_0^T\int_0^1h(t,x)\frac{\varphi(t,x)}{a(x)}\,dx\,dt
		\end{split}
	\end{equation*}
	for all $\varphi\in H^1(0,T;L^2_{\frac{1}{a}}(0, 1))\cap L^2(0,T;H^2_{\frac{1}{a}}(0, 1))$.
\end{Definition}

Therefore, using the semigroup theory, one can prove the following well posedness theorem.
\begin{Theorem}\label{Theorem 3.2}
	For all $h\in L^2(0,T;L^2_{\frac{1}{a}}(0,1))$ and $u_0\in L^2_{\frac{1}{a}}(0, 1)$, there exists a unique solution
	\begin{equation*}
		u\in \mathcal{C}\Bigl([0,T];L^2_{\frac{1}{a}}(0, 1)\Bigr )\cap L^2\Bigl (0,T;H^2_{\frac{1}{a}}(0, 1) \Bigr )
	\end{equation*}
	of (\ref{problema2}) such that
	\[
		\sup_{t\in [0,T]}\Vert u(t)\Vert^2_{L^2_{\frac{1}{a}}(0, 1)}+\int_0^T\Vert u(t)\Vert^2_{H^2_{\frac{1}{a}}(0, 1)}dt\le C_T\left (\Vert u_0\Vert^2_{L^2_{\frac{1}{a}}(0, 1)}+\Vert h\Vert^2_{L^2(0,T;L^2_{\frac{1}{a}}(0,1))} \right)
	\]
	for some positive constant $C_T$. In addition, if $h\in W^{1,1}(0,T;L^2_{\frac{1}{a}}(0,1))$ and $u_0\in H^2_{\frac{1}{a}}(0,1)$, then
	$
		u\in \mathcal{C}^1\Bigl([0,T];L^2_{\frac{1}{a}}(0, 1)\Bigr )\cap \mathcal{C}\Bigl ([0,T];D(A_2) \Bigr ).
$
\end{Theorem}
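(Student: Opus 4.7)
The plan is to apply the general theory of analytic semigroups, exploiting the generation property of $-A_2$ established in Theorem \ref{Theorem 3.1}. Since $-A_2$ generates a contractive analytic semigroup $\{e^{-tA_2}\}_{t\ge 0}$ on the Hilbert space $L^2_{\frac{1}{a}}(0,1)$, for every $u_0\in L^2_{\frac{1}{a}}(0,1)$ and $h\in L^2(0,T;L^2_{\frac{1}{a}}(0,1))$ the variation-of-constants formula
\[
u(t)=e^{-tA_2}u_0+\int_0^t e^{-(t-s)A_2}h(s)\,ds
\]
defines a unique mild solution belonging to $\mathcal{C}([0,T];L^2_{\frac{1}{a}}(0,1))$, and equivalence with the weak formulation follows by the standard density argument (approximating $u_0$ and $h$ by elements of $D(A_2)$ and $\mathcal{C}^1([0,T];L^2_{\frac{1}{a}}(0,1))$ respectively).

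The energy estimate and the $L^2(0,T;H^2_{\frac{1}{a}}(0,1))$ regularity will be derived by an a priori computation on smooth data, then extended by density. I would take the inner product of the equation with $u/a$ in $L^2(0,1)$; using identity (\ref{rem2}) with $v=u$, valid whenever $u(t)\in D(A_2)$, one gets
\[
\frac{1}{2}\frac{d}{dt}\|u(t)\|^2_{L^2_{\frac{1}{a}}(0,1)}+\|u_{xx}(t)\|^2_{L^2(0,1)}=\int_0^1\frac{h(t,x)u(t,x)}{a(x)}\,dx.
\]
Young's inequality on the right-hand side followed by Grönwall's lemma yields the bound on $\sup_{t\in [0,T]}\|u(t)\|^2_{L^2_{\frac{1}{a}}(0,1)}$, and integration in time controls $\int_0^T\|u_{xx}(t)\|^2_{L^2(0,1)}\,dt$. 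Invoking the equivalence between $\|\cdot\|_{H^2_{\frac{1}{a}}(0,1)}$ and $\|\cdot\|_2$ recalled at the beginning of Section \ref{Sec3} converts this into the stated estimate in the full $H^2_{\frac{1}{a}}$-norm.

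For the additional regularity under the stronger hypotheses $h\in W^{1,1}(0,T;L^2_{\frac{1}{a}}(0,1))$ and $u_0\in H^2_{\frac{1}{a}}(0,1)$, I would rely on classical results for analytic semigroups (see, e.g., Pazy's book): when the initial datum lies in $D(A_2)$ and the forcing term is $W^{1,1}$ in time with values in $L^2_{\frac{1}{a}}(0,1)$, the mild solution becomes a classical one, with $u\in \mathcal{C}^1([0,T];L^2_{\frac{1}{a}}(0,1))\cap \mathcal{C}([0,T];D(A_2))$. The main technical point to address is that the hypothesis is $u_0\in H^2_{\frac{1}{a}}(0,1)$ rather than $u_0\in D(A_2)$; here the analyticity of the semigroup provides the instantaneous smoothing $e^{-tA_2}u_0\in D(A_2)$ for $t>0$, while the density of $D(A_2)$ in $H^2_{\frac{1}{a}}(0,1)$ together with a uniform estimate on an approximating sequence allows the regularity to be promoted up to $t=0$. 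This matching of the natural Neumann boundary conditions encoded in $D(A_2)$ with the trace structure of $H^2_{\frac{1}{a}}(0,1)$ is the only subtle point; everything else is a direct application of the semigroup machinery.
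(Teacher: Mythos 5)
Your first two steps coincide with what the paper does (it offers no more detail than ``using the semigroup theory''): the variation-of-constants mild solution in $\mathcal{C}\bigl([0,T];L^2_{\frac{1}{a}}(0,1)\bigr)$ furnished by the contractive analytic semigroup of Theorem \ref{Theorem 3.1}, and the energy estimate obtained by testing with $u/a$, using \eqref{rem2}, Young's inequality, Gr\"onwall and the equivalence of the norms $\Vert\cdot\Vert_{H^2_{\frac{1}{a}}(0,1)}$ and $\Vert\cdot\Vert_2$. Up to there the proposal is fine and is essentially the paper's argument.

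The gap is in your treatment of the final regularity assertion. The mechanism you propose --- instantaneous smoothing $e^{-tA_2}u_0\in D(A_2)$ for $t>0$ combined with density of $D(A_2)$ in $H^2_{\frac{1}{a}}(0,1)$ and ``a uniform estimate on an approximating sequence'' to promote the regularity up to $t=0$ --- cannot work as stated. If $u\in\mathcal{C}\bigl([0,T];D(A_2)\bigr)$ (graph norm), then, $A_2$ being closed, necessarily $u(0)=u_0\in D(A_2)$; similarly, $u\in\mathcal{C}^1\bigl([0,T];L^2_{\frac{1}{a}}(0,1)\bigr)$ together with the equation and $h\in W^{1,1}$ forces $A_2u(t)$ to converge in $L^2_{\frac{1}{a}}(0,1)$ as $t\to0^+$, which again yields $u_0\in D(A_2)$. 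Hence no uniform bound in $\mathcal{C}\bigl([0,T];D(A_2)\bigr)$ can hold along approximations $u_{0,n}\in D(A_2)$ converging to a $u_0$ that is only in $H^2_{\frac{1}{a}}(0,1)$, and analyticity by itself only gives $\mathcal{C}^1\bigl((0,T];L^2_{\frac{1}{a}}(0,1)\bigr)\cap\mathcal{C}\bigl((0,T];D(A_2)\bigr)$. The classical result you want to invoke (Pazy, or \cite{en}) requires $u_0\in D(A_2)$; with $u_0$ only in the form domain $H^2_{\frac{1}{a}}(0,1)$ the correct conclusion reachable by the semigroup/maximal-regularity machinery for the self-adjoint non negative $A_2$ is $u\in H^1\bigl(0,T;L^2_{\frac{1}{a}}(0,1)\bigr)\cap L^2\bigl(0,T;D(A_2)\bigr)\cap\mathcal{C}\bigl([0,T];H^2_{\frac{1}{a}}(0,1)\bigr)$, not continuity with values in $D(A_2)$ up to $t=0$. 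You should therefore either assume $u_0\in D(A_2)$ at this step and apply the classical theorem directly, or weaken the regularity you claim near $t=0$; the density-plus-smoothing argument as written does not close.
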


\subsection{Strongly degenerate case}
In this subsection we will assume that the function $a$ is strongly degenerate. As in the previous subsections, we need to prove a formula similar to \eqref{GF1}. To this aim we need an additional assumption on the function $a$, which is not surprising because already used in other papers and it is satisfied by the prototype function we have in mind.

\begin{Assumptions}\label{hp 3.3}
	Assume that
	there exist $K \in [1,2)$ and $C>0$ such that 
	\[
	\frac{1}{a(x)}\le \frac{C}{|x-x_0|^K}
	\]
	for all $x \in [0,1]\setminus\{x_0\}$.
\end{Assumptions}
Observe that the previous hypothesis is obviously satisfied by $a(x):=|x-x_0|^K$, where $K\in [1,2)$, and it is more general then the one made in the divergence case, Hypothesis \ref{Ipoaggiuntiva articolo wentzell}.

\vspace{0.5cm}
Now, we introduce
\begin{equation*}
	X:=\biggl \{u\in H^2_{\frac{1}{a}}(0, 1): u(x_0)=0 \biggr \}.
\end{equation*}

Proceeding as in \cite{CaFrRo} and \cite{FGGR}, one can prove the following result. 
\begin{Proposition} \label{Prop 3.2}
	If Hypothesis \ref{hp 3.3} is satisfied, then
	\begin{equation*}
		H^2_{\frac{1}{a}}(0, 1)=X.
	\end{equation*}
\end{Proposition}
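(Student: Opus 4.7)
The inclusion $X \subseteq H^2_{\frac{1}{a}}(0,1)$ is automatic from the definition of $X$, so the task reduces to the reverse inclusion: given $u \in H^2_{\frac{1}{a}}(0,1)$, I must show $u(x_0)=0$. Since by definition $H^2_{\frac{1}{a}}(0,1) = L^2_{\frac{1}{a}}(0,1) \cap H^2(0,1)$, the one-dimensional Sobolev embedding yields $u \in \mathcal{C}^1[0,1]$, so the pointwise value $u(x_0)$ is unambiguously defined. The statement is thus purely about excluding $u(x_0) \neq 0$.

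My plan is to argue by contradiction. Suppose $u(x_0) \neq 0$. By continuity of $u$ at $x_0$ there exist $\delta>0$ and $c>0$ such that $u(x)^2 \geq c$ on $I_\delta := (x_0-\delta, x_0+\delta) \cap [0,1]$, and hence
\[
\int_0^1 \frac{u(x)^2}{a(x)}\,dx \;\ge\; c\int_{I_\delta}\frac{dx}{a(x)}.
\]
Reaching a contradiction reduces to showing that the right-hand side equals $+\infty$.

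This step is the heart of the matter and is where the \emph{strongly} degenerate setting enters crucially. By Definition \ref{def strong function} one has $\frac{1}{a}\notin L^1(0,1)$, while at the same time $a\in\mathcal{C}^1([0,1]\setminus\{x_0\})$ and $a>0$ on $[0,1]\setminus\{x_0\}$, so $\frac{1}{a}$ is bounded, hence integrable, on every compact subset of $[0,1]\setminus\{x_0\}$. The divergence of $\int_0^1 \frac{dx}{a(x)}$ must therefore be concentrated at $x_0$, forcing $\int_{I_\delta}\frac{dx}{a(x)}=+\infty$ for every $\delta>0$. This contradicts $u\in L^2_{\frac{1}{a}}(0,1)$, so $u(x_0)=0$ and $u\in X$. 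I expect the mild technical point to be just the justification of continuity at the boundary point $x_0$ when $x_0\in\{0,1\}$, which is handled identically using the one-sided neighbourhood $I_\delta\cap[0,1]$. Hypothesis \ref{hp 3.3} is not strictly required for this particular implication (it provides only an upper bound on $\frac{1}{a}$), but it presumably plays its role in the companion norm-equivalence, density, and integration-by-parts results that proceed in the spirit of \cite{CaFrRo} and \cite{FGGR}.
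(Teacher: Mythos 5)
Your proof is correct and is essentially the argument the paper has in mind: the paper gives no written proof, deferring to \cite{CaFrRo} and \cite{FGGR}, where the analogous characterizations are obtained exactly as you do, namely via the embedding $H^2_{\frac{1}{a}}(0,1)\subset H^2(0,1)\hookrightarrow \mathcal{C}^1[0,1]$ and the fact that $\frac{1}{a}\notin L^1(0,1)$ together with its local boundedness away from $x_0$ force $\int_{I_\delta}\frac{dx}{a}=+\infty$, contradicting $u\in L^2_{\frac{1}{a}}(0,1)$ if $u(x_0)\neq 0$. Your side remark that only the strong degeneracy (and not the upper bound of Hypothesis \ref{hp 3.3}) is used for this set equality is also accurate.
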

We underline that this characterization holds if $x_0 \in [0,1]$.

Now, consider the space $\mathcal W(0,1)$ defined in \eqref{spazioW}.
Clearly, if Hypothesis \ref{hp 3.3} is satisfied, we can rewrite $\mathcal W(0,1)$ as \[
\mathcal{W}(0,1)=\Bigl \{u\in H^2_{\frac{1}{a}}(0, 1):u(x_0)=0 \text{ and } au''''\in L^2_{\frac{1}{a}}(0, 1) \Bigr \}.
\]

The next proposition holds.
\begin{Proposition}\label{regolarità appendice}Assume that the function $a$ is strongly degenerate and Hypothesis \ref{hp 3.3} holds.

	 $\star$ If $x_0\in (0,1)$, then for all $y \in \mathcal W(0,1)$, $y'' \in W^{1,1}(0, x_0)$ and $y'' \in W^{1,1}(x_0,1)$.

$\star$ If $x_0=0$ or $x_0=1$, then for all $y \in \mathcal W(0,1)$, $y'' \in W^{1,1}(0,1)$.
\end{Proposition}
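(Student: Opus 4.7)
The plan is to show that the distributional derivative $y'''$ of $y''$ belongs to $L^1$ up to the degenerate point; together with $y'' \in L^2(0,1) \subset L^1$, this yields the desired $W^{1,1}$-membership. For concreteness I treat the case $x_0 \in (0,1)$ on the left subinterval $(0,x_0)$; the interval $(x_0,1)$ is symmetric, and the boundary cases $x_0 \in \{0,1\}$ amount to the same one-sided estimate applied on all of $(0,1)$. As a preliminary observation, since $a$ vanishes only at $x_0$, $y \in H^2(0,1)$ and $\sqrt{a}\,y'''' \in L^2(0,1)$ (equivalently $au'''' \in L^2_{\frac{1}{a}}(0,1)$), for every $c \in (0,x_0)$ the coefficient $a$ is bounded below by a positive constant on $[0,c]$, so $y'''' \in L^2(0,c)$ and hence $y \in H^4(0,c)$. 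By one-dimensional Sobolev embedding this gives $y''' \in \mathcal{C}([0,c])$; in particular $y'''(c)$ is a finite number and $y''' \in L^\infty(0,c)$, so the entire difficulty is concentrated on $(c,x_0)$.

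For $x \in (c,x_0)$ I would write the pointwise identity
\begin{equation*}
y'''(x)=y'''(c)+\int_c^x y''''(t)\,dt,
\end{equation*}
which is legitimate because $y'''' \in L^2_{\mathrm{loc}}(0,x_0) \subset L^1_{\mathrm{loc}}(0,x_0)$ by the same local argument. Taking absolute values and integrating over $(c,x_0)$, Tonelli's theorem produces
\begin{equation*}
\int_c^{x_0}|y'''(x)|\,dx \le (x_0-c)\,|y'''(c)| + \int_c^{x_0}|y''''(t)|\,(x_0-t)\,dt.
\end{equation*}
The Fubini-generated weight $(x_0-t)$ is the crucial point: it offsets the potentially strong singularity of $y''''$ as $t\to x_0$.

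To bound the last integral I would invoke Hypothesis \ref{hp 3.3}: the inequality $1/a(t)\le C/|t-x_0|^K$ rearranges to $|x_0-t|^{K/2}\le \sqrt{C}\,\sqrt{a(t)}$, so $(x_0-\cdot)^{K/2}\,y''''$ belongs to $L^2(c,x_0)$ with norm controlled by $\sqrt{C}\,\|\sqrt{a}\,y''''\|_{L^2(0,1)}$. Splitting $(x_0-t)=(x_0-t)^{K/2}(x_0-t)^{1-K/2}$ and applying Cauchy--Schwarz then yields
\begin{equation*}
\int_c^{x_0}|y''''(t)|(x_0-t)\,dt \le \bigl\|(x_0-\cdot)^{K/2}\,y''''\bigr\|_{L^2(c,x_0)}\left(\int_c^{x_0}(x_0-t)^{2-K}\,dt\right)^{1/2},
\end{equation*}
and the last factor is finite because $K\in[1,2)$ gives $2-K\in(0,1]$. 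Combined with the $L^\infty$-control of $y'''$ on $(0,c)$, this proves $y'''\in L^1(0,x_0)$, whence $y''\in W^{1,1}(0,x_0)$.

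The main obstacle is precisely that in the strongly degenerate regime $1/a\notin L^1$ near $x_0$, so the naive Cauchy--Schwarz estimate $\int|y''''|\,dt = \int \sqrt{a}\,|y''''|\cdot (1/\sqrt{a})\,dt$ diverges and $y''''$ itself need not be integrable up to $x_0$. The Fubini device transfers the burden onto an integral with the additional factor $(x_0-t)$, and it is exactly the sharp bound $K<2$ supplied by Hypothesis \ref{hp 3.3} that makes the resulting weighted integral finite and so recovers the $L^1$-control of $y'''$ needed for the $W^{1,1}$ conclusion.
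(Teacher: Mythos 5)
Your proof is correct and rests on the same mechanism as the paper's: you represent $y'''$ by integrating $y''''$ from the non-degenerate side, so that the Fubini/Tonelli-generated weight $|t-x_0|$ can be paired with Hypothesis \ref{hp 3.3} ($K<2$) and Cauchy--Schwarz against $\sqrt{a}\,y''''\in L^2(0,1)$. The only cosmetic difference is that you bound $\int|y'''|\,dx$ directly, whereas the paper runs the same weighted estimate to show that $\lim_{\delta\to 0}y''(\delta)$ exists (anchoring at the endpoint away from $x_0$ rather than at an interior point $c$) and then concludes the $W^{1,1}$ membership.
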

\begin{proof} Assume $x_0=0$, being the other cases similar. Fix $y \in \mathcal W(0,1)$ and take $\delta >0$. Clearly
	\begin{equation}\label{y secondo in delta*}
			y''(\delta)=y''(1)-\int_\delta^1y'''(x)dx,
			\end{equation}
			thus
			\begin{equation}\label{y secondo in delta}
		\begin{aligned}
			y''(\delta)&=y''(1)-\int_\delta^1\Biggl (y'''(1)-\int_x^1y''''(s)ds \Biggr )dx\\
			&	=y''(1)-y'''(1)+\delta y'''(1)+\int_\delta^1y''''(s)(s-\delta)ds.
		\end{aligned}
	\end{equation}
Trivially $\displaystyle\lim_{\delta\to 0}\delta y'''(1)= 0$; moreover
\[
	\int_\delta^1y''''(s)(s-\delta)ds=\int_\delta^1sy''''(s)ds-\delta \int_\delta^1y''''(s)ds.
\]
Since $sy''''(s)\in L^1(0,1)$, $\displaystyle\lim_{\delta\to 0}\int_\delta^1 sy''''(s)ds = \int_0^1 sy''''(s)ds$, by the absolute continuity of the integral,
and\begin{equation*}
	\begin{aligned}
		0&<\delta \int_\delta^1|y''''(s)|ds\le \delta \Biggl (\int_\delta^1\frac{1}{a(s)}ds\Biggr )^{\frac{1}{2}}\Vert\sqrt{a}y''''\Vert_{L^2(0,1)}\\
		&\le\delta^{1- \frac{K}{2}} \Biggl (\int_\delta^1\frac{s^K}{a(s)}ds \Biggr )^{\frac{1}{2}}\Vert\sqrt{a}y''''\Vert_{L^2(0,1)}\le C\delta^{1- \frac{K}{2}}(1-\delta)^{\frac{1}{2}}\Vert\sqrt{a}y''''\Vert_{L^2(0,1)},	\end{aligned}
\end{equation*} for a positive constant $C$.
If we pass to the limit as $\delta\to 0$ in (\ref{y secondo in delta}), we conclude that
\begin{equation*}
	\exists \lim_{\delta\to 0}y''(\delta)= y''(1)- y'''(1)+\int_0^1sy''''(s)ds \in\mathbb{R}.
\end{equation*}
By continuity, it is possible to define
$
	y''(0):=\lim_{\delta\to 0}y''(\delta).
$
In particular, by \eqref{y secondo in delta*},
\begin{equation*}
	\int_0^1y'''(x)dx=y''(1)-y''(0)
\end{equation*}
and the thesis follows.
\end{proof}

As a consequence, one can prove the next Gauss-Green formula. 
\begin{Lemma}\label{green} Assume Hypothesis \ref{hp 3.3}.

	 $\star$ If $x_0\in (0,1)$, then for all $(u,v)\in \mathcal W(0,1)\times H^2_{\frac{1}{a}}(0, 1)$, equality \[\int_0^1 u''''vdx= [u'''v]^{x=1}_{x=0}-[u''v']^{x=1}_{x=0}+ [u''v']^{x_0^+}_{x_0^-} +\int_0^1 u''v'' dx\]
holds. Here $u''(x_0^+)= \lim_{\delta\rightarrow 0^+} u''(x_0+\delta)$, $u''(x_0^-)= \lim_{\delta\rightarrow 0^+} u''(x_0-\delta)$ and $v'(x_0^+)= v'(x_0^-)= v'(x_0)$.

$\star$ If $x_0=0$, then for all $(u,v)\in  \mathcal W(0,1)\times H^2_{\frac{1}{a}}(0, 1)$
		\begin{equation*}
		\int_{0}^{1}u''''v\,dx=u'''(1)v(1)-[u''v']^{x=1}_{x=0}+\int_{0}^{1}u''v''dx.
	\end{equation*}

$\star$	If $x_0=1$, then for all $(u,v)\in  \mathcal W(0,1)\times H^2_{\frac{1}{a}}(0, 1)$
\begin{equation*}
	\int_{0}^{1}u''''v\,dx=-u'''(0)v(0)-[u''v']^{x=1}_{x=0}+\int_{0}^{1}u''v''dx.
\end{equation*}

\end{Lemma}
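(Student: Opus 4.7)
\textbf{Proof proposal for Lemma \ref{green}.} The plan is to imitate the argument used for Lemma \ref{gf0 strong case}: integrate by parts on compact subintervals that avoid $x_0$ and then take limits, using now the specific information available in the non divergence setting. The four tools at hand are: (i) Proposition \ref{Prop 3.2}, which tells us $v(x_0)=0$; (ii) the Sobolev embedding $H^2(0,1)\hookrightarrow \mathcal{C}^1[0,1]$ applied to $v$, so that $v'(x_0^-)=v'(x_0^+)=v'(x_0)$ and $|v(x)-v(x_0)|\le \|v'\|_\infty|x-x_0|$; (iii) Proposition \ref{regolarità appendice}, which ensures that $u''$ is $W^{1,1}$ on each side of $x_0$ (or on all of $(0,1)$ in the boundary cases), hence has one-sided limits $u''(x_0^\pm)$; and (iv) Hypothesis \ref{hp 3.3}, used to control $\|1/\sqrt{a}\|_{L^2}$ on short intervals around $x_0$.

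I focus on the case $x_0\in(0,1)$, the other two being analogous and simpler. Fix $u\in\mathcal{W}(0,1)$, $v\in H^2_{\frac{1}{a}}(0,1)$, and $\delta>0$ small. I split
\[
\int_0^1 u''''v\,dx=\int_0^{x_0-\delta}u''''v\,dx+\int_{x_0-\delta}^{x_0+\delta}u''''v\,dx+\int_{x_0+\delta}^1 u''''v\,dx.
\]
On the two outer intervals I integrate by parts twice, which is legal by (iii) together with $v\in H^2$, producing boundary contributions at $0,1$ and at $x_0\pm\delta$ as well as the integral $\int u''v''\,dx$; note that $u''v''\in L^1(0,1)$ since $u'',v''\in L^2(0,1)$. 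The middle integral is controlled by Cauchy-Schwarz,
\[
\Bigl|\int_{x_0-\delta}^{x_0+\delta}u''''v\,dx\Bigr|\le \|\sqrt{a}\,u''''\|_{L^2(x_0-\delta,x_0+\delta)}\,\|v/\sqrt{a}\|_{L^2(x_0-\delta,x_0+\delta)},
\]
and both factors vanish as $\delta\to 0$ by absolute continuity of the Lebesgue integral; likewise $\int_0^{x_0-\delta}u''v''\,dx$ and $\int_{x_0+\delta}^1 u''v''\,dx$ converge to the corresponding integrals on $(0,x_0)$ and $(x_0,1)$.

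The delicate step, and the one I expect to be the main obstacle, is showing that the interior boundary terms $u'''(x_0\pm\delta)\,v(x_0\pm\delta)$ tend to zero, since a priori $u'''$ may blow up at $x_0$. For this I use $v(x_0)=0$ from (i) together with the Lipschitz bound from (ii), which gives $|v(x_0\pm\delta)|\le C\delta$. To estimate $u'''$ I start from $u'''(x_0+\delta)=u'''(1)-\int_{x_0+\delta}^1 u''''\,ds$ and apply Cauchy-Schwarz together with Hypothesis \ref{hp 3.3} to get
\[
|u'''(x_0+\delta)|\le |u'''(1)|+\|\sqrt{a}\,u''''\|_{L^2}\Bigl(\int_{x_0+\delta}^1 \tfrac{1}{a}\,ds\Bigr)^{1/2}\le C+C\delta^{(1-K)/2}
\]
(analogously from the left; if $K=1$ the power is replaced by a logarithmic factor). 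Since $K\in[1,2)$, the product $\delta\cdot\delta^{(1-K)/2}=\delta^{(3-K)/2}$ vanishes as $\delta\to 0$, so the $u'''\,v$ contributions at $x_0^\pm$ disappear. The surviving interior boundary terms $u''(x_0\pm\delta)\,v'(x_0\pm\delta)$ converge to $u''(x_0^\pm)\,v'(x_0)$ by (ii) and (iii), which after the usual sign bookkeeping produce exactly the jump term $[u''v']_{x_0^-}^{x_0^+}$. Collecting everything yields the stated formula.

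For $x_0=0$ (respectively $x_0=1$) the argument is strictly simpler: I split only as $\int_0^\delta+\int_\delta^1$ (respectively $\int_0^{1-\delta}+\int_{1-\delta}^1$), apply the same Cauchy-Schwarz estimate on the short piece near $x_0$, and use $v(x_0)=0$ to annihilate the boundary contribution $u'''(x_0)v(x_0)$ at the degenerate endpoint. No jump term arises because, by Proposition \ref{regolarità appendice}, $u''$ is globally $W^{1,1}(0,1)$ and hence continuous across all of $[0,1]$, so the two remaining identities in the lemma follow directly from a single integration by parts and a passage to the limit.
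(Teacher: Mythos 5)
Your proposal is correct and follows essentially the same route as the paper's proof: the same splitting at $x_0\pm\delta$ with integration by parts on the outer intervals, the same Cauchy--Schwarz/absolute-continuity argument for the middle piece and for $\int u''v''$, the limits $u''(x_0\pm\delta)v'(x_0\pm\delta)\to u''(x_0^\pm)v'(x_0)$ via Proposition \ref{regolarità appendice}, and the key cancellation of the $u'''v$ terms using $v(x_0)=0$ together with the Hypothesis \ref{hp 3.3} bound giving $\delta^{(3-K)/2}$ (logarithmic correction when $K=1$). The only cosmetic caveat is that in the endpoint cases one should speak of $\lim_{\delta\to 0}u'''(x_0\pm\delta)v(x_0\pm\delta)=0$ rather than of the (possibly undefined) value $u'''(x_0)v(x_0)$, which is clearly what you intend.
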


\begin{proof}
	
Just to fix the idea, we will prove the thesis if  $x_0 \in (0,1)$, being the other proofs similar to this case. Take $\delta >0$ and $\mathcal K:= [0, x_0-\delta ] \cup [x_0+ \delta, 1]$.
	
	By definition of $\mathcal W(0,1)$, $u''''\in L^2(\mathcal K)$, thus, by \cite[Lemma 2.1]{CF}, $u \in H^4(\mathcal K)$. Hence, we can integrate by parts, obtaining
	
	\begin{equation*}
		\begin{split}
			&\int_{0}^{1}u''''v\,dx
			=\int_{0}^{x_0-\delta}u''''v\,dx+\int_{x_0-\delta}^{x_0+\delta}u''''v\,dx+\int_{x_0+\delta}^{1}u''''v\,dx \\
			&=\int_{0}^{x_0-\delta}u''v''dx+\int_{x_0-\delta}^{x_0+\delta}u''''v\,dx+\int_{x_0+\delta}^{1}u''v''dx\\
			&-  (u'''v)(0)+(u''v')(0)+  (u'''v)(1)-(u''v')(1)\\
			& + (u'''v)(x_0-\delta)-  (u'''v)(x_0+\delta)- (u''v')(x_0-\delta)+ (u''v')(x_0+\delta).
		\end{split}
	\end{equation*}
	As in \cite[Proposition 2.1]{CF}, one can prove
	\begin{equation*}
		\lim_{\delta\to 0}\int_{0}^{x_0-\delta}u''v''dx=\int_{0}^{x_0}u''v''dx,\,\,\,\,\,\lim_{\delta\to 0}\int_{x_0+\delta}^{1}u''v''dx=\int_{x_0}^{1}u''v''dx
	\end{equation*}
	and
	$
		\lim_{\delta\to 0}\int_{x_0-\delta}^{x_0+\delta}u''''v\,dx=0.
$
By Proposition \ref{regolarità appendice}, $\lim_{\delta \rightarrow0} (u''v')(x_0-\delta)= u''(x_0^-)v'(x_0)$ and $\lim_{\delta \rightarrow0} (u''v')(x_0+\delta)= u''(x_0^+)v'(x_0)$. \\
Moreover,
$\lim_{\delta \rightarrow0} (u'''v)(x_0-\delta)$ $= \lim_{\delta \rightarrow0} (u'''v)(x_0+\delta)=0$. Indeed
\[
\begin{aligned}
|(u'''v)(x_0+\delta)|&\le |v(x_0+\delta) u'''(1)| + \left|v(x_0+ \delta)\int_{x_0+\delta}^1 u''''(s)ds\right|\\
& \le  |v(x_0+\delta) u'''(1)| + C|v(x_0+ \delta)|	\left(\int_{x_0+\delta}^1 (s-x_0)^{-k}ds\right)^{\frac{1}{2}}\\
&\le \begin{cases}
|v(x_0+\delta) u'''(1)| + C \frac{\delta^{\frac{3-K}{2}}}{\sqrt{K-1}} \frac{|v(x_0+ \delta)|}{\delta}, & K \neq 1\\
|v(x_0+\delta) u'''(1)| + C (-\delta^2\log \delta)^{\frac{1}{2}}\frac{|v(x_0+ \delta)|}{\delta},& K=1.
\end{cases}
\end{aligned}
\]
In any case $ \lim_{\delta \rightarrow0} (u'''v)(x_0+\delta)=0$, since $v(x_0)=0$. Analogously, $ \lim_{\delta \rightarrow0} (u'''v)(x_0-\delta)=0$.
\end{proof}

Now, define the operator $A_2$ by 
\[
A_2u:=au''''
\]
for all $u \in D(A_2)$, where
\[
\begin{aligned}
	D(A_2):&=\{ u\in \mathcal W(0,1): u''(0)=u''(1)=u'''(0)=u'''(1)=0\},
\end{aligned}
\]
if $x_0 \in (0,1)$,
\[
\begin{aligned}
	D(A_2):&=\{ u\in \mathcal W(0,1):u''(0)=0=u''(1)=u'''(1) \},
\end{aligned}
\]
if $x_0=0$ and
\[
\begin{aligned}
	D(A_2):&=\{ u\in \mathcal W(0,1):u''(1)=0=u''(0)=u'''(0) \},
\end{aligned}
\]
if $x_0=1$.

Observe that if $x_0=0$ and $u \in \mathcal W(0,1)$, then $u''(0)$ is well defined thanks to Proposition \ref{regolarità appendice}. In this case, the other boundary terms in $D(A_2)$ are well defined since  far  from the degeneracy point the function $u \in \mathcal W(0,1)$ is $\mathcal C^3$. Analogously if $x_0=1$ or $x_0 \in (0,1)$.

Thanks to Proposition \ref{Prop 3.2}, the following characterizations of $D(A_2)$ hold under Hypothesis \ref{hp 3.3}:
\[
\begin{aligned}
	D(A_2):&=\{ u\in H^2_{\frac{1}{a}}(0, 1):u(x_0)=0= u''(0)=u''(1)=u'''(0)=u'''(1) \\
	& \qquad  \qquad\qquad\quad \;\; \text{ and } au''''\in L^2_{\frac{1}{a}}(0, 1) \},
\end{aligned}
\]
if $x_0 \in (0,1)$,
\[
\begin{aligned}
	D(A_2):&=\{ u\in H^2_{\frac{1}{a}}(0, 1):u(0)=u''(0)=0=u''(1)=u'''(1) \\
	& \qquad  \qquad\qquad\quad \;\; \text{ and } au''''\in L^2_{\frac{1}{a}}(0, 1) \},
\end{aligned}
\]
if $x_0=0$ and
\[
\begin{aligned}
	D(A_2):&=\{ u\in H^2_{\frac{1}{a}}(0, 1):u(1)=u''(1)=0=u''(0)=u'''(0) \\
	& \qquad  \qquad\qquad\quad \;\; \text{ and } au''''\in L^2_{\frac{1}{a}}(0, 1) \},
\end{aligned}
\]
if $x_0=1$.

Lemma \ref{green} is crucial to prove  the analogues of Theorem \ref{Theorem 3.1} and Theorem \ref{Theorem 3.2} in the strongly degenerate case at least if $x_0=0$ or $x_0=1$; if $x_0 \in (0,1)$ then we refer to \cite[Theorem 2.2]{memoirs}. Indeed we can consider the function $\mathcal L: L^2_{\frac{1}{a}}(0,1) \rightarrow L^2_{\frac{1}{a}}(0,1)$ defined as $\mathcal L(f)= u\in H^2_{\frac{1}{a}}(0,1)$, where $u$ is the unique solution of $\ds \int_0^1 \frac{uv}{a}dx + \int_0^1 u''v'' dx = \ds \int_0^1 \frac{fv}{a}dx$ for all $v \in H^2_{\frac{1}{a}}(0,1)$.

\end{document}